\declaretheorem[style=plain, sibling=theorem]{lemma, proposition, corollary, fact}
\declaretheorem[style=definition]{definition, construction, example, assumption}
\declaretheorem[style=remark]{remark, note}
\newcommand{\type}[1]{\mathit{type}_{#1}}
\newcommand{\src}[1]{\mathit{src}_{#1}}
\newcommand{\tar}[1]{\mathit{tar}_{#1}}
\newcommand{\TG}{\mathit{TG}}
\newcommand{\incl}[2]{\iota^{#1}_{#2}}
\newcommand{\Sub}[1]{\mathbf{Sub}(#1)}
\newcommand{\GraphTG}{\mathbf{Graph}_{\mathbf{TG}}}
\newcommand{\Mcal}{\mathcal{M}}
\newcommand{\Ical}{\mathcal{I}}
\newcommand{\Imodels}{\models_{\Ical}}
\newcommand{\nImodels}{\nvDash_{\Ical}}
\newcommand{\Iequiv}{\equiv_{\Ical}}
\newcommand{\Jcal}{\mathcal{J}}
\newcommand{\qm}{q_{\mathrm{m}}}
\newcommand{\qe}{q_{\mathrm{e}}}
\newcommand{\qmSPI}{\qm^{T}}
\newcommand{\gm}{g_{\mathrm{m}}}
\newcommand{\gep}{g_{\mathrm{e}}}
\newcommand{\gmSPI}{\gm^{T}}
\DeclareMathOperator{\Inst}{Inst}
\DeclareMathOperator{\Flat}{Flat}
\DeclareMathOperator{\Ima}{Im}
\DeclareMathOperator{\nl}{nl}
\newcommand{\Cmax}{C_{\mathrm{max}}}
\newcommand{\clb}{c_{\mathrm{lb}}}
\newcommand{\cub}{c_{\mathrm{ub}}}
\newcommand{\clbSubT}[1]{c_{\mathrm{lb}}^{\Sub{#1}}}
\newcommand{\cubSubT}[1]{c_{\mathrm{ub}}^{\Sub{#1}}}
\newcommand\isomto{\stackrel{\textstyle\sim}{\smash{\longrightarrow}\rule{0pt}{0.4ex}}}
\newif\iflong
	\newcommand{\shortorlong}[2]{#2}
	\newcommand{\shortorlong}[2]{#1}
\begin{document}
    \copyrightyear{2026}
    \copyrightclause{Copyright © 2026 for this paper by its authors. Use permitted under Creative Commons License Attribution 4.0 International (CC BY 4.0)}
    \conference{Joint Proceedings of the STAF 2026 Workshops: AgileMDE, GCM, ICMM, LLM4SE, TTC. Rennes, France, June 29--July 3, 2026}
    
    \iflong
	   \title{A Nesting-Free Normal Form for Nested Conditions in Finite Lattices of Subgraphs -- Extended Version}
    \else
    	\title{A Nesting-Free Normal Form for Nested Conditions in Finite Lattices of Subgraphs}
    \fi

    \author[1]{Jens Kosiol}[%
        orcid=0000-0003-4733-2777,
        email=kosiolje@mathematik.uni-marburg.de,
    ]
    \address[1]{Philipps-Universität Marburg, Marburg, Germany}

    \author[2]{Steffen Zschaler}[%
        orcid=0000-0001-9062-6637,
        email=szschaler@acm.org,
    ]
    \address[2]{King\rq{}s College London, London, UK}

    \begin{abstract}
        We present a nesting-free normal form for the formalism of \emph{nested conditions and constraints} in the context of finite lattices of subgraphs. 
    \end{abstract}

    \begin{keywords}
        First-order graph logic \sep
        nested graph conditions \sep
        normal forms \sep
        finite lattices of subgraphs
    \end{keywords}

    \maketitle

    \section{Introduction}
    \label{sec:introduction}
    In this paper, we present a nesting-free normal form for the formalism of \emph{nested conditions and constraints}~\cite{Rensink04, HP05} in the context of finite lattices of subgraphs. 
Nested conditions express properties of morphisms, while constraints express properties of objects. 
This formalism has established itself as the central logical formalism to be used in the context of \emph{graph transformation} because it allows to elegantly compute the interactions of the applications of transformation rules and the validity of constraints, e.g.,~\cite{HP09, EGHLO14, EhrigGHLO12, GHE14, EhrigEGH15, KosiolSTZ22, LauerKT25, KonigRSU25}. 
On (typed) graphs, nested constraints have been shown to be expressively equivalent to first-order logic~\cite{Rensink04, HP09}. 
\emph{Moreover, every additional level of nesting increases the expressivity of the formalism~\cite{Rensink04}.} 

While predominantly used for typed graphs, the logic of nested conditions and constraints can be used in any category. 
In ongoing work~\cite{KZ26}, we employ this formalism in the context of finite lattices of subgraphs, i.e., categories whose objects are the (equivalence classes of) subgraphs of a given finite \emph{container graph} $T$ and whose morphisms are inclusions; we denote such a category as $\Sub{T}$. 
Transformations of these objects can be realised via so-called \emph{subgraph transformation systems}~\cite{CHS08, HCE14}, where transformation rules themselves consist of subgraphs of the given container graph $T$ and are applied to subgraphs of $T$ in such a way that the result is again a subgraph of $T$. 

In this paper, we investigate the formalism of nested conditions in the context of finite lattices of subgraphs and make two contributions.
First, we work out a normalisation construction, called \emph{flattening}, that transforms every nested condition/constraint into a \emph{nesting-free} Boolean combination of literals (i.e., graph patterns). 
In that respect, our central result is the correctness of that construction (Theorem~\ref{thm:semantics-flattening}), implying that every condition/constraint can be equivalently transformed into a conjunctive normal form on the basis of literals (Corollary~\ref{cor:normal-forms-exist}). 
We present our flattening construction in Section~\ref{sec:sub_t_constraints}.

Furthermore, every graph $G$ in a lattice of subgraphs of a container graph $T$ can also be considered as a graph in the category $\GraphTG$, the category of all graphs typed over $\TG$, the type graph over which the container graph $T$ is typed. 
In particular, a nested constraint formulated in the category $\GraphTG$ can be evaluated for subgraphs of $T$. 
We develop a translation that, given a nested condition/constraint for $\GraphTG$ and a \emph{finite} container graph $T$ (typed over $\TG$) transforms that condition/constraint into one formulated in the category $\Sub{T}$.
Our main contribution in that regard is showing our translation to preserve the semantics of the given constraints (Theorem~\ref{thm:instantiation-preserves-semantics}). 
This ensures that, while nesting of constraints collapses in the context of lattices of subgraphs, no expressivity of the formalism is lost given a finite container graph $T$. 
Moreover, this translation makes specifying and working with nested conditions/constraints in the context of finite lattices of subgraphs much more practical, since nested conditions/constraints in $\GraphTG$ are much more compact than their counterparts in $\Sub{T}$.
We present our translation in Section~\ref{sec:reduction-constraints-to-sub-t}. 

The results in this paper are not surprising. 
It is intuitive that in a context where all finitely many (graph) elements that can ever appear are known beforehand, properties of these graphs can be expressed as a Boolean combination of non-nested graph patterns that list the different ways in which the property in question can (should not) hold. 
Still, making this precise is somewhat subtle. 
Moreover, in~\cite{KZ26}, we develop an interesting application of the results presented here. 
Considerably generalising work of Horcas et al.~\cite{HSBMZ22}, we there present an approach to \emph{non-blocking consistency-preserving subgraph transformation}, where, from a given subgraph transformation rule, we derive a family of rules that (i) contain the given rule as subrule, (ii) cannot introduce violations of the given constraints upon application and (iii) are not blocked by application conditions. 
Instead, the original rule is amended by additional actions that immediately remedy introduced constraint violations. 
We see considerable potential of this rule construction in particular in the context of \emph{model-driven optimisation}~\cite{JKLT23}. 
Importantly, the rule construction we develop in~\cite{KZ26} is based on the normal form for conditions in $\Sub{T}$ we develop here.

\paragraph{Related Work}
Nested conditions and constraints have been introduced by Rensink~\cite{Rensink04} and by Habel and Pennemann~\cite{HP05} to increase the expressiveness of non-nested constraints and application conditions that had been considered before~\cite{EEPT06}. 
Nested conditions are expressively equivalent to first-order logic on graphs~\cite[Theorems~1 and~3 or Theorems~10 and~11, respectively]{Rensink04, HP09}. 
Searching for normal forms of formulas is a standard technique in logic (see, e.g.,~\cite{Avigad22} for classic propositional and first-order logic). 
On graphs, every level of nesting increases expressiveness of nested conditions~\cite[Theorem~4]{Rensink04}; consequently, normal forms for this formalism cannot generally avoid nesting but recursively require the individual levels of nesting to be in a certain normal form like CNF (see, e.g.,~\cite[ch.~6.3]{Pennemann2009}). 
Furthermore, there exist normal forms for nested constraints that require quantifiers to alternate with nesting level. 
These, however, either need to allow for isomorphisms in the conditions~\cite{StoltenowKSCLO24} or are only guaranteed to exist for \emph{linear} conditions (i.e., conditions without conjunctions and disjunctions)~\cite{SH19}. 
In this paper, we develop a non-nested normal form for nested conditions in the very restricted setting of the lattice of subgraphs of a given graph.

\paragraph{Structure}
Before presenting our contribution in Sections~\ref{sec:sub_t_constraints} and~\ref{sec:reduction-constraints-to-sub-t}, we begin by introducing the \emph{class--responsibility assignment (CRA) problem} as a running example we use to illustrate our constructions (Section~\ref{sec:running-example}) and by presenting preliminaries (Section~\ref{sec:preliminaries}). 
We conclude in Section~\ref{sec:conclusion} and provide all proofs \shortorlong{in an extended version of this paper~\cite{KosiolZ26}}{in Appendix~\ref{app:proofs}}.
\emph{Sections~\ref{sec:running-example} and~\ref{sec:preliminaries} are taken over from~\cite{KZ26} with only minor adaptations}.

    \section{Running Example: The CRA Case}
    \label{sec:running-example}
    In the CRA problem~\cite{Fleck+16}, a set of methods and attributes must be allocated to a (to be determined) number of classes to optimise cohesion and coupling metrics of the resulting software design.
It is not immediately obvious that the search space for this problem is finite, as it might involve the generation of an unspecified number of new classes (which are nodes in a graph representing allocations of methods/attributes to classes).
However, once one realises that any meaningful solution contains at most as many classes as there are features (methods and attributes), it becomes clear that the problem is about determining the correct set of edges between features and classes. 
In particular, we will be able to understand the solutions of a given problem instance as a lattice of subgraphs $\Sub{T}$. 

\begin{figure}
    \centering
    \includegraphics{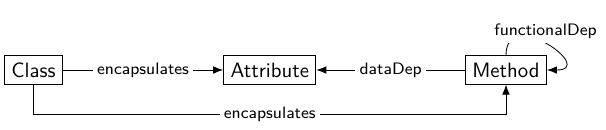}
    \caption{A meta-model (type graph) $\TG$ for defining instances of the CRA problem (based on~\cite{Fleck+16})}
    \label{fig:MM-CRA}
\end{figure}

Figure~\ref{fig:MM-CRA} shows the metamodel (type graph) for the CRA problem---a class diagram whose instances describe specific sets of classes, attributes, and methods for which to determine an allocation.
Figure~\ref{fig:CRA-example-instance} then shows a model that instantiates the meta-model in Figure~\ref{fig:MM-CRA} and describes a concrete problem instance $P$ with 3 methods and 3 attributes that need to be allocated to classes.
Figure~\ref{fig:CRA-example-solution} shows a corresponding instance $S$ of the metamodel that represents one possible solution to the problem instance defined by $P$.
Here, we have introduced two classes and made specific assignments of methods and attributes to these classes. 
The distinction between \emph{problem instances} and \emph{solutions} is not of importance for this paper; we have discussed this distinction in more detail in~\cite{KZ26}, where it becomes relevant, and we will shortly comment on it in the following section. 

\begin{figure}
    \centering
    \includegraphics[width=\textwidth]{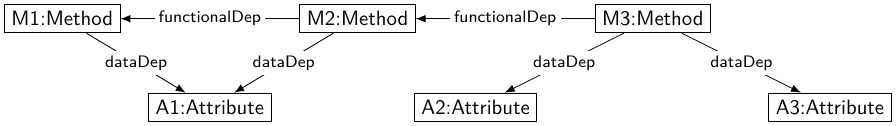}
    \caption{Example instance $P$ of the CRA problem}
    \label{fig:CRA-example-instance}
\end{figure}

\begin{figure}
    \centering
    \includegraphics[width=\textwidth]{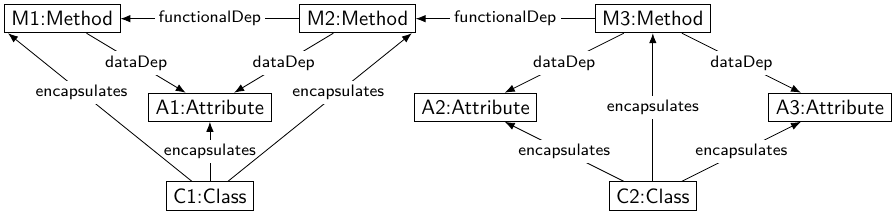}
    \caption{Example of a solution $S$ to the CRA problem instance depicted in Fig.~\ref{fig:CRA-example-instance}}
    \label{fig:CRA-example-solution}
\end{figure}

    \section{Preliminaries}
    \label{sec:preliminaries}
    In this section, we introduce our preliminaries. 
As our general background, we recall \emph{typed graphs}~\cite{EEPT06} and \emph{nested conditions and constraints}~\cite{Rensink04, HP09}. 
Moreover, we present \emph{finite lattices of subgraphs}, for which we investigate nested conditions and constraints more closely in this paper. 

\subsection{Typed Graphs}

	Graphs are sets of nodes with edges between them. 
	Typing offers a way to equip the individual elements of a graph with meaning; a \emph{type graph} provides the available types for elements, and morphisms assign the elements of \emph{typed graphs} to those types. 

	\begin{definition}[Graph. Typed graph]
		A \emph{graph} $G = (V_G,E_G,\src{G},\tar{G})$ consists of a set of nodes $V_G$, a set of edges $E_G$, and source and target functions $\src{G},\tar{G}\colon E_G \to V_G$. 
		A \emph{graph morphism} $f \colon G \to H$ between two graphs is a pair of functions $f_V\colon V_G \to V_H$ and $f_E\colon E_G \to E_H$ that is compatible with the source and target functions, such that 
			$\src{H} \circ f_E = f_V \circ \src{G} \text{ and } \tar{H} \circ f_E = f_V \circ \tar{G}$.
		
		A \emph{type graph} $\TG= (V_{\TG},E_{\TG},\src{\TG},\tar{\TG})$ is a fixed graph. 
		A \emph{graph typed over a fixed type graph $\TG$} consists of a graph $G$ and a graph morphism $\type{G} \colon G \to \TG$. 
		A \emph{typed graph morphism} $f\colon G \to H$ between graphs $G$ and $H$ typed over $\TG$ is a graph morphism that is compatible with typing, i.e., such that $\type{H} \circ f = \type{G}$. 
		
		A typed graph morphism is \emph{injective/surjectiv/bijective/an inclusion} if both of its components are.
		We denote injective morphisms via hooked arrows $f\colon G \hookrightarrow H$ and regularly denote inclusions as $\incl{G}{H}\colon G \subseteq H$, where the indices denote the involved graphs. 

        A typed graph $G$ is \emph{finite} if both its sets of nodes $V_G$ and of edges $E_G$ are.
	\end{definition}

	\begin{remark}
		To ease language and notation, we adopt the following conventions and assumptions. 
		\begin{itemize}[nosep]
			\item In this paper, we work with typed graphs and always assume a fixed type graph $\TG$ over which all graphs are typed. 
			Except for the type graph, we will just speak of \emph{graphs}, and we do not introduce a special notation for typed graphs. 
			Thus, speaking of a graph $G$, we tacitly assume a type graph $\TG$ and a type morphism $\type{G} \colon G \to \TG$ to exist. 
			Similarly, we also regularly just speak of morphisms (and not of typed ones).
			
			\item Via $\emptyset$ we denote the \emph{initial} or \emph{empty graph}, also when it is considered to be typed over the given type graph $\TG$ (via the empty function).
			\item By $\GraphTG$ we denote the category of all graphs typed over the given type graph $\TG$. 
		\end{itemize}%
	\end{remark}

	\begin{example}\label{ex:type-graph-and-typed-graph}
       The graph $\TG$ from Fig.~\ref{fig:MM-CRA} can serve as a type graph, modelling the domain of the CRA problem. 
       The nodes provide the types \textsf{Class, Attribute} and \textsf{Method}. 
       \textsf{Methods} can have dependencies on \textsf{Methods} (\textsf{functionalDep}) and on \textsf{Attributes} (\textsf{dataDep}). 
       \textsf{Classes} can \textsf{encapsulate} \textsf{Methods} and \textsf{Attributes}.

       The further graphs from Sect.~\ref{sec:running-example}, namely graph $P$ (Fig.~\ref{fig:CRA-example-instance}) and graph $S$ (Fig.~\ref{fig:CRA-example-solution}), are typed over that type graph $\TG$ (and the typing morphisms are indicated by the annotations of nodes and edges). 
	\end{example}

\subsection{Lattices of Subgraphs}

	\emph{Subobject transformation systems} have been developed as a tool to study different notions of causality between rule applications in (double-pushout) rewriting systems~\cite{CHS08,HCE14}. 
	Intuitively, one studies transformations of subobjects of a given object from an ($\Mcal$-)adhesive category. 
    In this paper, we do not care about this notion of transformation but more closely investigate nested conditions for the categories for which these transformations are defined. 
    In that, we restrict ourselves to the case where underlying objects are graphs. 
	For simplicity of presentation, we work with subgraphs instead of equivalence classes of injective morphisms. 

	\begin{definition}[Subgraph. (Finite) lattices of subgraphs]
		Given a typed graph $G$, a \emph{subgraph} of it is a typed graph $H$, such that $V_H \subseteq V_G$, $E_H \subseteq E_G$ and $\src{H}$, $\tar{H}$, and $\type{H}$ are restrictions of the according functions of $G$ (in particular, with every edge $e \in E_H$ its source and target nodes in $V_G$ belong to $V_H$). 
		Via $H \subseteq G$ we denote the subgraph relationship between graphs. 
		
		Given a (finite) typed graph $T$, the \emph{(finite) lattice of subgraphs of~$T$}, denoted as $\Sub{T}$, has graphs $G\subseteq T$ as objects and inclusions $\incl{G}{H}\colon G\subseteq H$ between such subgraphs as morphisms.
		We call $T$ the \emph{container graph} of that category. 
	\end{definition}

	\begin{remark}
		Finiteness of the container graph $T$ obviously implies finiteness of $\Sub{T}$. 
        What we call the \emph{container graph} would be called \emph{type graph} in the terminology of~\cite{CHS08}. 
		We deviate from that terminology to distinguish the container graph $T$ from the type graph $\TG$ over which $T$ (and all graphs from $\Sub{T}$) are typed.
	\end{remark}

	\begin{example}\label{ex:container-and-subgraph} 
         \begin{figure}
            \centering
            \includegraphics[width=\textwidth]{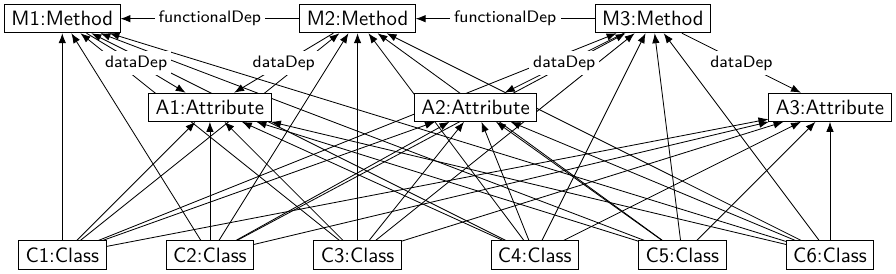}
            \caption{%
				A container graph $T$ defining a lattice of subgraphs $\Sub{T}$ that includes all interesting solutions to the CRA problem instance $P$ defined in Fig.~\ref{fig:CRA-example-instance}. 
        		In this graph, we omit the \textsf{encapsulates} annotation to not further clutter the figure.%
			}
            \label{fig:CRA-example-solution-T}
        \end{figure}

        Given the type graph $\TG$ from Fig.~\ref{fig:MM-CRA} (see Sect.~\ref{sec:running-example} and Example~\ref{ex:type-graph-and-typed-graph}), also the graph $T$ depicted in Fig.~\ref{fig:CRA-example-solution-T} is typed over it.
        Considering the CRA problem instance defined by the graph $P$ shown in Fig.~\ref{fig:CRA-example-instance}, $\Sub{T}$ defines a lattice that contains all interesting solutions to that problem instance (disregarding solutions with more than six \textsf{Classes} of which several will be necessarily empty). 
        The graph $S$ depicted in Fig.~\ref{fig:CRA-example-solution} is one of the elements of $\Sub{T}$. 
        Note that $\Sub{T}$ is actually too large to be considered the search space for finding solutions for the problem instance defined by the graph $P$: we are only interested in graphs also containing that instance $P$; otherwise, the problem definition would be changed (containing a different set of \textsf{Methods} and \textsf{Attributes} and/or other dependencies). 
        In~\cite{KZ26}, we address that problem by only considering graphs $G \in \Sub{T}$ for which $P \subseteq G$, where $P \subseteq T$ is considered to define a problem instance; a category that we denote as $\Sub{T \supseteq P}$. 
        In this paper, however, we can safely ignore this subtlety.
	\end{example}

    Categories $\Sub{T}$ consisting of the subgraphs of $T$ actually constitute lattices~\cite{CHS08, HCE14}:
	\begin{remark}
		In any category of subgraphs $\Sub{T}$, if there exists a morphism between subgraphs $A$ and $B$, this morphism is unique (by left-cancelability of monomorphisms). 
        Moreover, given an adhesive category $\mathcal{C}$ (such as the category of graphs typed over a given type graph $\TG$), for every object $T$ of $\mathcal{C}$, $\Sub{T}$ constitutes a distributive lattice~\cite[Theorem~5.1 and Corollary~5.2]{LS05}.
		Given two subobjects $A,B \subseteq T$, their \emph{meet} $A \cap B$ in $\Sub{T}$ arises from pulling back $A \to T \leftarrow B$ in $\mathcal{C}$ and their \emph{join} $A \cup B$ in $\Sub{T}$ from pushing out $A \leftarrow A \cap B \to B$ in $\mathcal{C}$. 
		In our setting, where we work with subgraphs instead of equivalence classes of injections, this means that the meet $A \cap B$ is literally the intersection of $A$ and $B$ while the join $A \cup B$ is their union.  
	\end{remark}

\subsection{Nested Conditions and Constraints}
\label{sec:preliminaries:conditions_and_constraints}

	Next, we define \emph{nested conditions} and \emph{constraints}~\cite{HP09,Rensink04,EGHLO14}. 
	Nested conditions provide a logic to express properties of morphisms in a category; constraints express properties of objects. 
	For (typed) graphs, nested conditions are expressively equivalent to standard first-order logic on graphs~\cite{Rensink04,HP09},
	and every level of nesting increases the expressivity~\cite{Rensink04}. 

	In this work, given a type graph $\TG$ and a (finite) graph $T$ typed over $\TG$, we consider two different categories: 
	The category of graphs typed over $\TG$, $\GraphTG$, and the category $\Sub{T}$ of subgraphs of $T$ (with inclusions as morphisms). 
	Since every graph from $\Sub{T}$ is also a graph typed over $\TG$ (by composition of the inclusion with the typing of $T$), we obtain two variants of defining formulas for it.
	We define conditions and constraints category-theoretically and afterwards illustrate the definitions for $\GraphTG$ and $\Sub{T}$.
	\begin{definition}[(Nested) conditions and constraints]\label{def:conditions-and-constraints} 
		In a category $\mathcal{C}$, given an object $C_0$, a \emph{(nested) condition} over $C_0$ is defined recursively as follows:
		\begin{itemize}
			\item \textsf{true} is a condition over $C_0$.
			\item If $a_1\colon C_0 \hookrightarrow C_1$ is a monomorphism and $d$ is a condition over $C_1$, $\exists \, (a_1\colon C_0 \hookrightarrow C_1, d)$ is a condition over $C_0$. 
			\item Moreover, Boolean combinations of conditions over $C_0$ are conditions over $C_0$. 
		\end{itemize}
		In a category with a strict initial object $\emptyset$, a \emph{(nested) constraint} is a condition over $\emptyset$. 
		
		We recursively define the \emph{nesting level} of a condition $c$ as
		\begin{align*}
			\nl(\textsf{true}) & = 0; & 
            \nl(\exists(a_1\colon C_0 \hookrightarrow C_1,d)) & = \nl(d) + 1; \\
			\nl(\neg d) & = \nl(d); & 
			\nl(d_1 \odot d_2) & = \max\{\nl(d_1),\nl(d_2)\}\text{, where } \odot\in\{\vee,\wedge\} .
		\end{align*}
		
		\emph{Satisfaction} of a nested condition $c$ over $C_0$ for a morphism $g\colon C_0 \rightarrow G$, denoted as $g \models c$, is defined as follows: 
		\begin{itemize}
			\item Every morphism satisfies \textsf{true}.
			\item The morphism $g$ satisfies a condition of the form $c = \exists \, (a_1\colon C_0 \hookrightarrow C_1, d)$ if there exists a monomorphism $q\colon C_1 \hookrightarrow G$ such that $g = q \circ a_1$ and $q \models d$.
			\item For Boolean operators, satisfaction is defined as usual.
		\end{itemize}
		An object $G$ satisfies a constraint $c$, denoted as $G \models c$, if the unique monomorphism $\iota_{G}\colon \emptyset \hookrightarrow G$ does so. 
	\end{definition}

	\begin{remark}
		In the notation of conditions, we drop the domains of the involved morphisms and occurrences of \textsf{true} whenever they can be unambiguously inferred. 
		Moreover, we introduce $\forall \, (C_1, d)$ as an abbreviation for the condition $\neg \exists \, (C_1, \neg d)$. 
		For example, we write $\forall \, (C_1, \exists \, C_2)$ instead of $\neg \exists \, (a_1\colon \emptyset \hookrightarrow C_1, \neg \exists \, (a_2\colon C_1 \hookrightarrow C_2, \textsf{true}))$. 
        At other places, especially where it is convenient for compact presentation of computations, we will not display the graphs of a condition but merely the identifiers of the involved morphisms, writing, e.g., $\forall \, (a_1, \exists \, a_2)$.  
		
		To be able to notationally distinguish between satisfaction of constraints and conditions in the category $\GraphTG$ from the one in $\Sub{T}$ (where $T$ is typed over $\TG$), we use the symbol $\Imodels$ to denote satisfaction in $\Sub{T}$ and $\Iequiv$ to denote equivalence in $\Sub{T}$. 
        Moreover, we denote the morphisms in conditions over graphs from $\Sub{T}$ as $b_i\colon B_{i-1} \subseteq B_i$ to visually differentiate these from (i) morphisms $a_i\colon C_{i-1} \hookrightarrow C_i$ in conditions over a graph from $\GraphTG$ and (ii) from inclusions $\incl{C_i}{G}\colon C_i \subseteq G \in \Sub{T}$ we use to check the satisfaction of such conditions. 
	\end{remark}

	\begin{example}\label{ex:constraintsGraphTGVsSubT}
        The definition of the CRA problem comes with additional constraints (compare Sect.~\ref{sec:running-example}): 
        As a minimum, one requires that every \textsf{Attribute} and every \textsf{Method} is encapsulated by exactly one \textsf{Class}. 

        \begin{figure}
            \centering
            \includegraphics{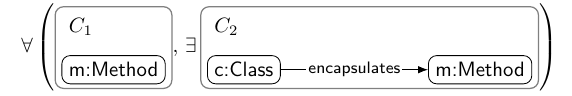}
            \caption{Constraint $\clb$ in $\GraphTG$ requiring every \textsf{Method} to be assigned to at least one \textsf{Class}}
            \label{fig:CRA-constraint-lowerbound}
        \end{figure}

	    \begin{figure}
            \centering
            \includegraphics{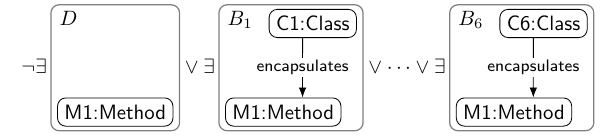}
            \caption{One conjunct of constraint $\clbSubT{T}$ in $\Sub{T}$, requiring \textsf{Method M1} to be assigned to at least one of \textsf{Classes C1, \dots, C6} from $T$ (in case it exists)}
            \label{fig:CRA-constraint-lowerbound-SubT-unsimplified}
		\end{figure}

        Figure~\ref{fig:CRA-constraint-lowerbound} depicts in $\GraphTG$ the constraint expressing the \emph{lower bound} on encapsulation of \textsf{Methods}, namely stating that every \textsf{Method} shall be encapsulated by (at least) one \textsf{Class} (the names of the elements in the graphs indicate the morphism from $C_1$ to $C_2$); in the following, we refer by $\clb$ to that constraint. 
        In $\Sub{T}$, in contrast, we can only express properties of concrete elements (as the constraints in $\Sub{T}$ are defined in terms of subgraphs of $T$); thus, the constraint in Fig.~\ref{fig:CRA-constraint-lowerbound-SubT-unsimplified} expresses that the concrete \textsf{Method M1} is encapsulated by \emph{a} \textsf{Class}---either \textsf{C1, C2, \dots} or \textsf{C6}. 
        The same constraint has to be added for \textsf{Methods M2} and \textsf{M3}. 
        By $\clbSubT{T}$, we refer to the conjunction of these constraints. 

        \begin{figure}
            \centering
            \includegraphics{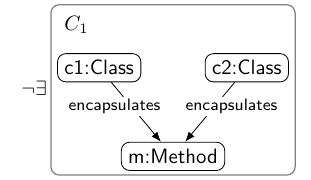}
            \caption{Constraint $\cub$ in $\GraphTG$ forbidding any \textsf{Method} to be assigned to more than one \textsf{Class}}
            \label{fig:CRA-constraint-upperbound}
        \end{figure}

	    \begin{figure}
            \centering
            \includegraphics{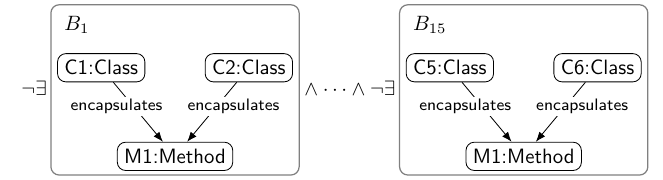}
            \caption{Conjunct of constraint $\clbSubT{T}$ in $\Sub{T}$, forbidding \textsf{Method M1} to be assigned to more than one of the \textsf{Classes C1, \dots, C6} from $T$}
            \label{fig:CRA-constraint-upperbound-SubT}
		\end{figure}

        Continuing, the constraint from Fig.~\ref{fig:CRA-constraint-upperbound} expresses in $\GraphTG$ that no \textsf{Method} is encapsulated by more than one \textsf{Class}, that is, it states an \emph{upper bound} on these encapsulations; in the following, we refer by $\cub$ to that constraint. 
        Again, in $\Sub{T}$, we have to express this for every \textsf{Method} individually and exclude any possible pair of \textsf{Classes}; the resulting set of constraints for \textsf{Method M1} is shown in Fig.~\ref{fig:CRA-constraint-upperbound-SubT}.
        By $\cubSubT{T}$, we refer to the conjunction of these constraints. 
	\end{example}

    \section{A Normal Form for Conditions in Lattices of Subgraphs}
    \label{sec:sub_t_constraints}
    In this section, we introduce a normal form for conditions in lattices of subgraphs $\Sub{T}$. 
In Sect.~\ref{sec:preliminaries:conditions_and_constraints}, we have already discussed that, given a type graph $\TG$ and a (finite)\footnote{%
    The results in this section actually also hold when one allows graphs to be infinite, in contrast to our results in Sect.~\ref{sec:reduction-constraints-to-sub-t}.%
} 
graph $T$ typed over it, we can consider conditions and constraints in two different categories: the category of graphs typed over $\TG$, $\GraphTG$, and the category $\Sub{T}$ of subgraphs of $T$. 
Working with conditions in $\Sub{T}$ facilitates the construction of activation diagrams and subsequent derivation of non-blocking consistency-preserving rules we develop in~\cite{KZ26}. 
In particular, these constructions depend on the normal form we develop in this section. 
However, conditions in $\Sub{T}$ are inconvenient for the specification of general constraints (because of their expressing properties of concrete elements of $T$). 
To address this, in Sect.~\ref{sec:reduction-constraints-to-sub-t}, we provide a way to \emph{instantiate} a condition defined for graphs from $\GraphTG$ into a condition for graphs from $\Sub{T}$ such that its semantics is preserved. 

The key feature of the normal form we construct in this section is that it avoids any nesting. 
We show that in $\Sub{T}$ for every condition there is an equivalent condition without nesting. (This is in stark contrast to the case of $\GraphTG$, where every level of nesting increases the expressivity~\cite{Rensink04}.) 
We start by defining \emph{literals}, the building blocks of our normal form. 
\begin{definition}[Literal]
    Given a type graph $\TG$ and a graph $T$ typed over it, a \emph{literal} in $\Sub{T}$ is a condition of the form \textsf{true}, \textsf{false}, $\exists \, (b\colon B_0 \subseteq B_1, \textsf{true})$ or $\neg \exists \, (b \colon B_0 \subseteq B_1, \textsf{true})$, where $B_0,B_1 \in \Sub{T}$. 
\end{definition}

Next, we develop our \emph{flattening construction} that simultaneously pushes negations inward and removes all nesting in a condition in $\Sub{T}$. 
For two positive existential quantifiers it applies the well-known equivalence $\exists \, (b_1, \exists \, (b_2,d)) \equiv \exists \, (b_2 \circ b_1,d)$~\cite[Fact~4]{HP05}. 
A positive existential condition that is followed by a negative one---that is, a condition of the form $\exists \, (b_1\colon B_0 \subseteq B_1, \neg d)$---is replaced by $\exists \, (b_1\colon B_0 \subseteq B_1) \wedge \neg \Tilde{d}$, where $\Tilde{d}$ is an adaptation of $d$ so that it becomes a condition over $B_0$ (instead of $B_1$). 
In $\GraphTG$, this generally is not a semantic equivalence because the first variant forbids an occurrence of $d$ at a fixed location for $b_1$ whereas the second variant forbids any occurrence of $\Tilde{d}$. 
For satisfaction in $\Sub{T}$, however, both notions coincide because there can only be one inclusion of a graph into another one. 
This is why nesting collapses in $\Sub{T}$.
In the following, we first prove the just stated equivalence and then use it in our \emph{flattening construction}, of which it is the crucial step to transform conditions in $\Sub{T}$ into a non-nested normal form. 

\begin{restatable}[Correctness of extracting negation]{lemma}{extractingNegation}\label{lem:extracting-negation}
    Given a type graph $\TG$ and a graph $T$ typed over it, for any condition $d$ in $\Sub{T}$ over a graph $B_1 \subseteq T$, it holds that
    \begin{equation*}
        \exists \, (b_0\colon B_0 \subseteq B_1, \neg d) \Iequiv \exists \, (b_0 \colon B_0 \subseteq B_1, \textsf{true}) \wedge \neg \exists \, (b_0 \colon B_0 \subseteq B_1, d) . \qedhere
    \end{equation*}
\end{restatable}

The lemma just stated is the crucial ingredient to proving that our following \emph{flattening construction} preserves semantics (in $\Sub{T}$). 
In flattening, we push negations over Boolean operators and extract them from existentially bound conditions; moreover, we compose existentially bound conditions. 
We end up with a condition that is nesting-free and where every negation either appears in front of an existentially bound condition or at nesting level zero (where the root of the condition can be some Boolean combination of \textsf{true}). 
We define flattening \emph{over a morphism}, where we use that morphism to collect all morphisms we have to compose in the resulting condition (that somewhat eases notation). 
Flattening of a condition can then just be performed by flattening that condition over the identity morphism of the graph over which the condition is defined.

\begin{construction}[Flattening]\label{constr:flattening}
    Given a type graph $\TG$, a graph $T$ typed over it, a graph $B_0 \in \Sub{T}$, a morphism $b_0$ with co-domain $B_0$ and a condition $c$ over $B_0$ in $\Sub{T}$, we recursively define to \emph{flatten} $c$ over $b_0$ via:
    \begin{align}
        \Flat(b_0, \textsf{true}) = \exists \, (b_0, \textsf{true})\tag{F1}\\
        \Flat(b_0, \neg\textsf{true}) = \exists \, (b_0, \textsf{false}) \equiv \textsf{false}\tag{F2}\\
        \Flat(b_0, d_1 \odot d_2) = \Flat(b_0, d_1) \odot \Flat(b_0, d_2)\tag{F3}\\ 
        \Flat(b_0, \neg (d_1 \odot d_2)) = \Flat(b_0, \neg d_1) \,\overline{\odot}\, \Flat(b_0, \neg d_2)\tag{F4}\\
        \Flat\left(b_0, \exists \, (b_1\colon B_0 \subseteq B_1, d)\right) = \Flat(b_1 \circ b_0, d)\tag{F5}\\
        \Flat(b_0, \neg \exists \, (b_1\colon B_0 \subseteq B_1, d) = \exists \, (b_0, \textsf{true}) \wedge \neg \Flat(b_1 \circ b_0, d)\tag{F6}\\
        \Flat(b_0, \neg \neg c) = \Flat(b_0, c)\tag{F7}
    \end{align}
    where $\odot \in \{\wedge, \vee\},\ \overline{\wedge} = \vee, \text{ and }\overline{\vee} = \wedge$.
\end{construction}
    
\begin{example}\label{ex:example-flattening}
    One regularly encounters constraints of the structure $\forall \, (b_0\colon \emptyset \hookrightarrow C_1, \exists \, (b_1\colon C_1 \hookrightarrow C_2, \textsf{true}))$ in practice---compare, for instance, the constraint $\clb$ defining the lower bound for assignments to \textsf{Classes} (Fig.~\ref{fig:CRA-constraint-lowerbound}) in Example~\ref{ex:constraintsGraphTGVsSubT}. 
    When we instantiate such a constraint from $\GraphTG$ in $\Sub{T}$, we will obtain a conjunction of constraints of which each one has the structure (compare Example~\ref{ex:instantiation} for this claim)
    \begin{equation*}
        c = \neg\exists \, \big(b_1\colon \emptyset \subseteq B_1, \textsf{true}\big) \vee \exists \, \big(b_1\colon \emptyset \subseteq B_1, \bigvee_{j=1}^t \exists \, (b_2^j\colon B_1 \subseteq B_2^j, \textsf{true})\big) .
    \end{equation*}
    Flattening such a constraint $c$ (over the identity morphism $id_{\emptyset}$) proceeds as follows (where, for shorter notation, we merely note down the morphisms and omit the graphs over which they are defined):
    \begin{align*}
        \Flat(id_{\emptyset}, c) & \stackrel{(F3)}{=} \Flat\big(id_{\emptyset}, \neg\exists \, (b_1, \textsf{true})\big) \vee \Flat\big(id_{\emptyset}, \exists \, (b_1, \bigvee_{j=1}^t \exists \, (b_2^j, \textsf{true}))\big)\\
         & \stackrel{(F6), (F5)}{=} \bigg(\exists \, (id_{\emptyset}, \textsf{true}) \wedge \neg \Flat\big(id_{\emptyset}, \exists \, (b_1, \textsf{true}) \big)\bigg) \vee \Flat\big(b_1 \circ id_{\emptyset}, \bigvee_{j=1}^t \exists \, (b_2^j, \textsf{true})\big)\\
         & \stackrel{(F5), (F3)}{=} \bigg(\exists \, (id_{\emptyset}, \textsf{true}) \wedge \neg \Flat\big(b_1 \circ id_{\emptyset}, \textsf{true} \big)\bigg) \vee \bigvee_{j=1}^t \Flat\big(b_1 \circ id_{\emptyset}, \exists \, (b_2^j, \textsf{true})\big)\\
         & \stackrel{(F1), (F5)}{=} \bigg(\exists \, (id_{\emptyset}, \textsf{true}) \wedge \neg \exists \, (b_1 \circ id_{\emptyset}, \textsf{true})\bigg) \vee \bigvee_{j=1}^t \Flat\big(b_2^j \circ b_1 \circ id_{\emptyset}, \textsf{true}\big)\\
         & \stackrel{(F1)}{=} \bigg(\exists \, (id_{\emptyset}, \textsf{true}) \wedge \neg \exists \, (b_1 \circ id_{\emptyset}, \textsf{true})\bigg) \vee \bigvee_{j=1}^t \exists \, (b_2^j \circ b_1 \circ id_{\emptyset}, \textsf{true})\\
         & \equiv \neg \exists \, (b_1, \textsf{true}) \vee \bigvee_{j=1}^t \exists \, (b_2^j \circ b_1, \textsf{true}) .
    \end{align*}
    This is exactly the structure of the constraint $\clbSubT{T}$ in Fig.~\ref{fig:CRA-constraint-lowerbound-SubT-unsimplified}. 
\end{example}

Next, though intuitively clear, we prove that applying the flattening construction computes a condition of nesting level $\leq1$.
In other words, it outputs a Boolean combination of literals. 

\begin{restatable}[Flattening results in non-nested condition]{lemma}{flatteningToLiterals}\label{lem:flattening-to-literals}
    Given a type graph $\TG$, a graph $T$ typed over it, a morphism $b_0$ with co-domain $B_0$ in $\Sub{T}$ and a condition $c$ over $B_0$, then 
    \begin{equation*}
        \nl(\Flat(b_0,c)) \leq 1 .
    \end{equation*}
    In particular, the condition resulting from flattening over a morphism is a Boolean combination of literals. 
\end{restatable}

Next, we show that flattening preserves the semantics of satisfaction in $\Sub{T}$; the proof proceeds via structural induction where the most interesting inductive step can be performed because of Lemma~\ref{lem:extracting-negation}. 

\begin{restatable}[Flattening preserves satisfaction in $\Sub{T}$]{theorem}{semanticsFlattening}\label{thm:semantics-flattening}
    Given a type graph $\TG$, a graph $T$ typed over it, a morphism $b_0\colon X \subseteq B_0$ in $\Sub{T}$ and a condition $c$ over $B_0$ in $\Sub{T}$, then, for any graph $G$ in $\Sub{T}$ and any inclusion $\incl{X}{G}\colon X \subseteq G$,
    \begin{equation*}
        \incl{X}{G} \Imodels \exists \, (b_0,c) \iff \incl{X}{G} \Imodels \Flat(b_0,c) .
    \end{equation*}
    In particular,
    \begin{equation*}
        \incl{B_0}{G} \Imodels c \iff \incl{B_0}{G} \Imodels \Flat(\mathit{id}_{B_0},c) . \qedhere
    \end{equation*}
\end{restatable}

The ability to flatten constraints in $\Sub{T}$ in a semantics-preserving way means we can assume all constraints and conditions to be given in some of the normal forms that are known from propositional logic. 
The following corollary states this---it just combines the results stated in Lemma~\ref{lem:flattening-to-literals} and Theorem~\ref{thm:semantics-flattening} with the well-known existence of normal forms for propositional logic (see, e.g., \cite[Proposition~2.6.1]{Avigad22}).
\begin{corollary}\label{cor:normal-forms-exist}
    Given a type graph $\TG$, a graph $T$ typed over it and a set $\mathcal{C}$ of conditions in $\Sub{T}$, there is a set $\mathcal{C}^{\mathrm{NF}}$ of conditions in $\Sub{T}$ such that (i) all conditions from $\mathcal{C}^{\mathrm{NF}}$ are Boolean combinations of literals over $\Sub{T}$ and are all in the same normal form known from propositional logic (e.g., \emph{conjunctive} or \emph{disjunctive normal form}) and (ii) there is a function $j\colon \mathcal{C} \to \mathcal{C}^{\mathrm{NF}}$ such that for every $c \in \mathcal{C}$ and every $S \in \Sub{T}$
    \begin{equation*}
        \incl{B_0}{S} \Imodels c \iff \incl{B_0}{S} \Imodels j(c) ,
    \end{equation*}
    where $B_0$ is the graph over which condition $c$ is defined and $\incl{B_0}{S}\colon B_0 \subseteq S$. 
\end{corollary}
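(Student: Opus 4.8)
The plan is to peel the corollary apart into its two cited ingredients and glue them back together literal by literal. First I would fix, for each condition $c \in \mathcal{C}$ defined over some $B_0 \subseteq T$, the condition $\Flat(\mathit{id}_{B_0}, c)$. By Lemma~\ref{lem:flattening-to-literals} this is a Boolean combination of literals over $\Sub{T}$; moreover, unwinding Construction~\ref{constr:flattening} starting from $\mathit{id}_{B_0}$, every morphism accumulated along the recursion is a composite $b_k \circ \dots \circ b_1 \circ \mathit{id}_{B_0}$ with domain $B_0$, so every literal that actually occurs is $\textsf{true}$, $\textsf{false}$, or of the form $\exists\,(B_0 \subseteq B_k, \textsf{true})$ or its negation, i.e.\ a condition over $B_0$. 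By Theorem~\ref{thm:semantics-flattening} (the ``in particular'' part), $\incl{B_0}{S} \Imodels c \iff \incl{B_0}{S} \Imodels \Flat(\mathit{id}_{B_0},c)$ for every $S \in \Sub{T}$.

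Next I would feed $\Flat(\mathit{id}_{B_0}, c)$ into the propositional normalization of \cite[Proposition~2.6.1]{Avigad22}, treating the finitely many literals occurring in it as propositional atoms. This yields a formula $j(c)$ over the same atoms, in whichever of the standard normal forms one prefers (e.g.\ conjunctive or disjunctive), with the same Boolean truth function. I then set $\mathcal{C}^{\mathrm{NF}} := \{\, j(c) : c \in \mathcal{C} \,\}$. Since $j(c)$ is built from the very same literals using only $\neg,\wedge,\vee$, it is again a Boolean combination of literals over $\Sub{T}$ and is in the chosen normal form, which gives item~(i).

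For item~(ii), the equivalence $\incl{B_0}{S} \Imodels c \iff \incl{B_0}{S} \Imodels j(c)$ follows by chaining the equivalence above with the key observation that satisfaction of a Boolean combination of literals by the fixed inclusion $\incl{B_0}{S}$ depends only on the truth values $\bigl(\incl{B_0}{S} \Imodels \ell\bigr)$ of the individual literals $\ell$ --- and here all those $\ell$ genuinely are conditions over $B_0$, so ``$\incl{B_0}{S} \Imodels \ell$'' is meaningful uniformly. Hence $\Flat(\mathit{id}_{B_0},c)$ and $j(c)$, realizing the same Boolean function of these literals, are satisfied by $\incl{B_0}{S}$ for exactly the same $S \in \Sub{T}$; combining with the first equivalence yields the claim.

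The one delicate point --- and it is bookkeeping rather than a genuine obstacle --- is the assertion that $j$ is a \emph{bijection}. By construction $j\colon \mathcal{C} \to \mathcal{C}^{\mathrm{NF}}$ is surjective, but injectivity can fail in principle, since two syntactically distinct yet $\Iequiv$-equivalent input conditions may normalize to the same output. To obtain a literal bijection one either regards $\mathcal{C}^{\mathrm{NF}}$ as the family $(j(c))_{c \in \mathcal{C}}$ indexed by $\mathcal{C}$, making $j$ bijective by construction, or --- equivalently for every downstream use --- one first replaces $\mathcal{C}$ by a set of pairwise $\Iequiv$-inequivalent representatives, which does not change which objects of $\Sub{T}$ satisfy the constraints. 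I expect no real difficulty beyond handling this and keeping track of the ``all literals are over $B_0$'' point that makes the truth-function argument go through.
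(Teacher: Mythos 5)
Your proposal is correct and follows exactly the route the paper intends: the paper gives no separate proof for this corollary, stating only that it combines Lemma~\ref{lem:flattening-to-literals} and Theorem~\ref{thm:semantics-flattening} with the existence of propositional normal forms, which is precisely your argument (your added care about all literals being conditions over $B_0$ and about the bijection is sound bookkeeping that the paper leaves implicit).
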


For our work in~\cite{KZ26}, the \emph{conjunctive normal form} (CNF) provides a particularly convenient basis.
That is, every constraint or condition is a conjunction of disjunctions of literals over $\Sub{T}$. 
Considering the separate clauses (i.e., disjunctions of literals) as individual conditions further simplifies our work. 
The following semantic equivalence allows us to further restrict the kinds of conditions we need to consider: we can unite conjunctions of positive literals to a single literal. 
\begin{restatable}[Union of conjunctions]{lemma}{unionConjunctions}\label{lem:equivalence-union}
    Given a type graph $\TG$ and a graph $T$ typed over it, let $c_i = \exists \, (b_1^i\colon B_0 \subseteq B_1^i, \textsf{true})$, where $i = 1, \dots, k$, be a finite set of positive literals in $\Sub{T}$. 
    Then, for every $S \in \Sub{T}$, 
    \begin{equation*}
        \incl{B_0}{S} \Imodels \bigwedge_{i=1}^k c_i \iff \incl{B_0}{S} \Imodels \exists \, (b_1\colon B_0 \subseteq \bigcup_{i=1}^kB_1^i, \textsf{true})
    \end{equation*}
    where $\incl{B_0}{S}\colon B_0 \subseteq S$ is the inclusion of $B_0$ in $S$ and $b_1$ the inclusion of $B_0$ in $\bigcup_{i=1}^k B_1^i$. 
\end{restatable}

\paragraph{Presentation of Constraints.}
Wrapping up this section, in a lattice of subgraphs $\Sub{T}$, given a set $\mathcal{C}$ of conditions and constraints, we can assume each of it to be given in one of the following forms.
\begin{itemize}
    \item the \emph{trivial} cases: $c = \textsf{true}$ or $c = \textsf{false}$; 
    \item the \emph{purely positive} case: 
    \begin{equation*}
        c = \exists \, (B_1, \textsf{true}) \vee \dots \vee \exists \, (B_t, \textsf{true}); 
    \end{equation*}
    \item the \emph{purely negative} case:
    \begin{equation*}
        \neg\exists\, (D, \textsf{true}), 
    \end{equation*}
    which---by De Morgan's laws and  Lemma~\ref{lem:equivalence-union}---covers disjunctions of negative literals: 
    \begin{align*}
        c = \neg\exists \, (B_1, \textsf{true}) \vee \dots \vee \neg\exists \, (B_s, \textsf{true}) 
            & \equiv \neg (\exists \, (B_1, \textsf{true}) \wedge \dots \wedge \exists \, (B_s, \textsf{true}))\\
            & \Iequiv \neg\exists\, (D, \textsf{true}), \text{ where } D \coloneqq \bigcup_{i=1}^sB_i;
    \end{align*}
    \item the \emph{mixed} or \emph{implication} case:
    \begin{equation*}
        \exists\, (D, \textsf{true}) \implies \exists \, (B_1, \textsf{true}) \vee \dots \vee \exists \, (B_t, \textsf{true}) ,
    \end{equation*}
    which---again by De Morgan's laws and  Lemma~\ref{lem:equivalence-union}---covers clauses that contain both positive and negative literals: 
    \begin{align*}
        c & = \exists \, (B_1, \textsf{true}) \vee \dots \vee \exists \, (B_t, \textsf{true}) \vee \neg\exists \, (D_1, \textsf{true}) \vee \dots \vee \neg\exists \, (D_s, \textsf{true}) \\
        & \Iequiv \exists \, (B_1, \textsf{true}) \vee \dots \vee \exists \, (B_t, \textsf{true}) \vee \neg\exists\, (\bigcup_{i=1}^sD_i, \textsf{true}) \\
        & \equiv \exists\, (D, \textsf{true}) \implies \exists \, (B_1, \textsf{true}) \vee \dots \vee \exists \, (B_t, \textsf{true}) ,  \text{ where } D\coloneqq \bigcup_{i=1}^s D_i.
    \end{align*}
\end{itemize}

    \section{Instantiation of Conditions and Constraints}
    \label{sec:reduction-constraints-to-sub-t}
    Given a type graph $\TG$ and a finite graph $T$ typed over it, specifying formulas in $\Sub{T}$ is inefficient: rather than capturing the logical essence of the constraint, we have to enumerate every possible situation explicitly.
Instead, we would like to use the power of quantification and express constraints using the more common nested conditions in $\GraphTG$.
In this section, we show how to translate a nested condition from $\GraphTG$ into an equivalent nested condition in $\Sub{T}$. 
Practically, this increases the usability of nested conditions and constraints in $\Sub{T}$, and, formally, it demonstrates their expressiveness in that context: first-order properties can still be expressed in that logic, even though nesting collapses.\footnote{%
    Note that this is not longer true for lattices of subgraphs $\Sub{T}$ where $T$ is infinite. 
    In the CRA example, for instance, when given a graph $T$ containing infinitely many nodes of type \textsf{Method}, the constraint $\clb$ in $\GraphTG$ (see Fig.~\ref{fig:CRA-constraint-lowerbound}) still expresses that every \textsf{Method} is assigned to a \textsf{Class}. 
    To express its analogue $\clbSubT{T}$ in $\Sub{T}$, however, we would generally need the possibility to build an infinite conjunction (of disjunctions), explicitly requiring for every concrete \textsf{Method} to be assigned to one of the available \textsf{Classes}.%
} 

Our translation of conditions and constraints is based on \emph{instantiations}.
In an instantiation, a graph (or a morphism) from $\GraphTG$ is identified with an isomorphic subgraph (inclusion) in $\Sub{T}$.
Such instantiations are the central technical ingredient in the transformation of a nested constraint/condition in $\GraphTG$ into one in $\Sub{T}$.

\begin{definition}[Instantiation]
    Given a type graph $\TG$ and a graph $T$ typed over it, an \emph{instantiation $\mu = (j,\incl{B}{S})$ in $S\in\Sub{T}$} of a graph $C$ from $\GraphTG$ consists of an isomorphism $j\colon C \isomto B$, where $B \subseteq S \in \Sub{T}$ and $\incl{B}{S}\colon B \subseteq S$ denotes that inclusion; we sometimes also just call the image $B$ of an instantiation $\mu$ an instantiation.
    
    An \emph{instantiation in $S \in \Sub{T}$} of a morphism $a\colon C_0 \to C_1$ from $\GraphTG$ is a morphism $b\colon B_0 \subseteq B_1$ between the images of instantiations $\mu_0 = C_0 \isomto B_0 \subseteq S$ and $\mu_1 = C_1 \isomto B_1 \subseteq S$ of $C_0$ and $C_1$ in $S$, respectively, such that the diagram in Fig.~\ref{fig:instantiation-morphism} commutes.
    For instantiations in $T \in \Sub{T}$ we just speak of \emph{instantiations in $\Sub{T}$}.
    \begin{figure}%
        \centering
        \begin{tikzpicture}
            \matrix (m) [	matrix of math nodes,
                                        row sep=1.25em,
                                        column sep=1.25em,
                                        minimum width=1.25em,
                                        nodes in empty cells]
            {
                C_0	&							& C_1 \\
                    &							& \\
                B_0	&							& B_1 \\
                    & 							& \\
                    & S	                        & \\};
            \path[-stealth]
                (m-1-1) edge [->] node [above] {\scriptsize $a$} (m-1-3)
                                edge [draw=none] node [sloped] {$\simeq$} node [left,inner xsep=8pt] {\scriptsize $j_0$} (m-3-1)
                (m-1-3) edge [draw=none] node [sloped] {$\simeq$} node [right,inner xsep=8pt] {\scriptsize $j_1$} (m-3-3)
                (m-3-1) edge [draw=none] node {$\subseteq$} node [below, inner ysep=7pt] {\scriptsize $b$} (m-3-3)
                                edge [draw=none] node [sloped] {$\subseteq$} node [left,inner xsep=9pt,pos=.6] {\scriptsize $\incl{B_0}{S}$} (m-5-2)
                (m-3-3) edge [draw=none] node [sloped,allow upside down] {$\subseteq$} node [right,inner xsep=9pt,pos=.6] {\scriptsize $\incl{B_1}{S}$} (m-5-2);
        \end{tikzpicture}
        \caption{Instantiation of a morphism $a\colon C_0 \to C_1$ in $S$}%
        \label{fig:instantiation-morphism}%
    \end{figure}
\end{definition}

\begin{example}
    Consider the graphs $C_1$ and $C_2$ from the constraint $\clb$, depicted in Fig.~\ref{fig:CRA-constraint-lowerbound}, requiring every \textsf{Method} to be assigned to a \textsf{Class} in the CRA problem. 
    Instantiations of $C_1$ in the graph $S \subseteq T$ (see Fig.~\ref{fig:CRA-example-solution}) consist of a graph that contains one of the \textsf{Methods M1, M2} or \textsf{M3} as its single node, the isomorphism between $C_1$ and that graph, and the inclusion of that graph into $S$. 
    Similarly, instantiations of $C_2$ in $S$ consist of a graph that contains one of the \textsf{Classes C1} or \textsf{C2}, one of the \textsf{Methods M1, M2} or \textsf{M3} and an \textsf{encapsulates}-edge between them, of the isomorphism between $C_2$ and that graph, and the inclusion of that graph into $S$. 
    Given instantiations of $C_1$ and $C_2$ in $S$, the instantiation of the morphism $a_2\colon C_1 \hookrightarrow C_2$ is the inclusion between the instantiated graphs in $\Sub{T}$ (if this exists). 
    Instantiations in $T$ (instead of $S$) are similar, just that in $T$ more \textsf{Classes} are available to map to. 
\end{example}

As a rather obvious but important technical tool we note that instantiations correspond to injective morphisms; this will provide us the bridge between the ordinary notion of satisfaction of a condition in $\GraphTG$ and the one in $\Sub{T}$ (denoted as $\Imodels$).

\begin{restatable}[Correspondence of instantiations and injectives]{lemma}{correspondenceInstantiations}\label{lem:correspondence-instantiations}
    Given a type graph $\TG$ and a graph $T$ typed over it, for any graph $C$ from $\GraphTG$ and any graph $S \in \Sub{T}$ there exists a one-to-one correspondence between injective morphisms $q\colon C \hookrightarrow S$ in $\GraphTG$ and instantiations $\mu = (j, \incl{B}{S})$ of $C$ in $S$. 
    
    Furthermore, any instantiation $\mu = (j, \incl{B}{S})$ of a graph $C \in\GraphTG$ in a graph $S \in \Sub{T}$ can be extended to an instantiation $\mu^{T} = (j, \incl{B}{T})$ of $C$ in $\Sub{T}$. 
\end{restatable}

Furthermore, instantiations of morphisms are determined by the instantiations of their domain and codomain, and they are inclusions themselves.
\begin{restatable}[Uniqueness of instantiation]{lemma}{uniquenessInstantiation}\label{lem:unique-instantiation-morphism}
    Let $\TG$ be a type graph and $T$ be a graph typed over it.
    For every morphism $a\colon C_0 \to C_1$ in $\GraphTG$ and all pairs of instantiations $\mu_0 = (j_0, \incl{B_0}{S})$ of $C_0$ and $\mu_1 = (j_1, \incl{B_1}{S})$ of $C_1$ in some graph $S \in \Sub{T}$ there is an instantiation of the morphism $a$ if and only if 
    \begin{equation}\label{eq:existence-instantiating-morphism}
        \incl{B_1}{S} \circ j_1 \circ a = \incl{B_0}{S} \circ j_0. 
    \end{equation}
    In that case, moreover, the instantiating morphism $b\colon B_0 \subseteq B_1$ in $\Sub{T}$ is unique and an inclusion and the morphism $a$ is injective. 
\end{restatable}

In the following, we will often consider instantiations that stem from an injective morphism $q$. 
We will denote such an instantiation as $\mu_q = (\qe,\qm)$. 

The following theorem is the central result of this section, stating that conditions from $\GraphTG$ can equivalently be transformed into conditions from $\Sub{T}$ for finite graphs $T$ typed over $\TG$. 
The operation $\Inst$ appearing in its statement refers to the instantiation of constraints we develop in the following. 
Together with Theorem~\ref{thm:semantics-flattening} this means that, when working in $\Sub{T}$ one can really assume every first-order constraint to be given as discussed at the end of Sect.~\ref{sec:sub_t_constraints}. 

\begin{restatable}[Instantiation preserves semantics]{theorem}{instantiationPreservationSemantics}\label{thm:instantiation-preserves-semantics}
    Given a type graph $\TG$ and a \emph{finite} graph $T$ typed over it, let $c$ be a nested condition in $\GraphTG$ over some graph $C_0$. 
    Then, for every graph $S \in \Sub{T}$ and every injective morphism $g\colon C_0 \hookrightarrow S$,
    \begin{equation*}
        g \models c \iff \gm \Imodels \Inst(c,\mu_g^{T},\Sub{T}) 
    \end{equation*}
    where $\mu_g = (\gep,\gm)$ is the instantiation of $C_0$ in $S$ that corresponds to $g$ and $\mu_g^{T}$ is the corresponding instantiation in $T$ (both guaranteed to uniquely exist by Lemma~\ref{lem:correspondence-instantiations}).
\end{restatable}

Next, we develop the instantiation process $\Inst$ referred to in Theorem~\ref{thm:instantiation-preserves-semantics}.
This process relies on two intuitive ideas: 
First, an existentially quantified graph $C$ is replaced by a disjunction over all ways in which it can be instantiated in $\Sub{T}$. 
Second, a subcondition $\neg \exists \, (C, d)$ states that at all occurrences of $C$ (that are compatible with the considered instantiation of the graph $C_0$ over which the condition is defined) the condition $d$ must \emph{not} hold. 
We therefore replace such subconditions by a conjunction which, for every possible instantiation of $C$, states that the occurrence of such an instantiation implies the condition $\neg d$. 
The formal treatment of this second idea is somewhat technical as we need to account for the fact that no occurrence of $C$ might exist and need to take care to still obtain a nested condition over the correct graph. 
We clarify these details in Lemma~\ref{lem:negation-instantiation}, after we have developed and illustrated our instantiation process.  

\begin{construction}[Instantiating conditions and constraints]
    Given a type graph $\TG$ and a finite graph $T$ typed over it, let $c$ be a nested condition over $C_0$ in $\GraphTG$.  
    Given an instantiation $\mu_0 = (j\colon C_0 \isomto B_0, \incl{B_0}{T} \colon B_0 \subseteq T)$ of $C_0$ in $\Sub{T}$, the \emph{instantiated variant of $c$} (in $\Sub{T}$ with respect to $\mu_0$), denoted as $\Inst(c,\mu_0,\Sub{T})$, is recursively constructed as follows:
    \begin{itemize}
        \item for $c = \textsf{true}$, $\Inst\big(c,\mu_0,\Sub{T}\big) = \textsf{true}$;

        \item for $c = d_1 \odot d_2$, where $\odot \in \{\wedge, \vee\}$,
        \begin{equation*}
            \Inst\big(c,\mu_0,\Sub{T}\big) = \Inst\big(d_1,\mu_0,\Sub{T}\big) \odot \Inst\big(d_2,\mu_0,\Sub{T}\big);
        \end{equation*}
        
        \item for $c = \exists \, (a_1 \colon C_0 \hookrightarrow C_1,d)$, where $d$ is a condition over $C_1$, 
        \begin{equation*}
            \Inst\big(c,\mu_0,\Sub{T}\big) = \bigvee_{b_1 \in \Jcal} \exists \, \Big(b_1\colon B_0 \subseteq B_1, \Inst\big(d,\mu_1,\Sub{T}\big)\Big),
        \end{equation*}
        where $\mathcal{J}$ indexes instantiations of $a_1$ with domain $\mu_0$ in $\Sub{T}$ (and $\mu_1$ denotes the respective codomain); the empty disjunction is interpreted as $\textsf{false} = \neg \textsf{true}$;
        
        \item for $c = \neg d$, we make use of a look ahead of size 1 and set $\Inst\big(c,\mu_0,\Sub{T}\big) =$
        \begin{equation*}
            \left\{\begin{aligned}
                \textsf{false},																																& \text{ for } d = \textsf{true} \\
                \Inst\big(\neg e_1,\mu_0,\Sub{T}\big) \overline{\odot} \Inst\big(\neg e_2,\mu_0,\Sub{T}\big),		& \text{ for } d = e_1 \odot e_2 \\
                \Inst\big(e,\mu_0,\Sub{T}\big),																									& \text{ for } d = \neg e \\
                \bigwedge_{b_1 \in \Jcal} \Big( \neg\exists\, (b_1\colon B_0 \subseteq B_1, \textsf{true}) \vee & \\
                \exists \, \big(b_1\colon B_0 \subseteq B_1, 
                \Inst(\neg e,\mu_1,\Sub{T})\big)\Big), & \text{ for } d = \exists \, (a_1 \colon C_0 \hookrightarrow C_1,e) ,
            \end{aligned}\right.
        \end{equation*}
        where, again, $\mathcal{J}$ indexes instantiations of $a_1$ with domain $\mu_0$, $\mu_1$ denotes the respective codomain, the empty conjunction is interpreted as \textsf{true}, and $\odot \in \{\wedge, \vee\}$, $\overline{\wedge} = \vee$, and $\overline{\vee} = \wedge$.
    \end{itemize}
    
    Given a constraint $c$, the \emph{instantiated variant of $c$ in $\Sub{T}$}, denoted as $\Inst(c,\Sub{T})$, is defined as
    \begin{equation*}
        \Inst(c,\Sub{T}) = \Inst(c,\mu_0,\Sub{T}) ,
    \end{equation*}
    where $\mu_0$ is the unique instantiation of $\emptyset$ in $T$. 
\end{construction}

\begin{example}\label{ex:instantiation}
    A kind of constraint one regularly encounters in practice and that is tedious to define in $\Sub{T}$ are constraints of the structure $\forall \, (C_1, \exists \, C_2)$. 
    In our running example of the CRA problem, this concerns the constraint $\clb$, requiring every \textsf{Method} and \textsf{Attribute} to be assigned to at least one \textsf{Class} (see Fig.~\ref{fig:CRA-constraint-lowerbound} in Example~\ref{ex:constraintsGraphTGVsSubT}).
    Instantiating such a constraint amounts to the following: 
    First, $\forall \, (C_1, \exists \, C_2)$ is syntactic sugar for $\neg \exists \, (a_1\colon \emptyset \hookrightarrow C_1, \neg \exists\, (a_2\colon C_1 \hookrightarrow C_2, \textsf{true}))$. 
    Thus, by construction, we obtain\begin{equation*}
        \begin{split}
            \Inst(c_{\mathrm{lb}}, \Sub{T}) =	& \bigwedge_{b_1 \in \Jcal} \bigg(\neg\exists \, (b_1\colon \emptyset \subseteq B_1, \textsf{true}) \vee \\
                                                                        & \exists \, \Big(b_1\colon \emptyset \subseteq B_1, \Inst\big(\neg ( \neg\exists \, (a_2\colon C_1 \hookrightarrow C_2, \textsf{true})), \mu_1,\Sub{T}\big)\Big)\bigg) ,
        \end{split}
    \end{equation*}
    where $\mathcal{J}$ indexes morphisms $b_1$ from $\emptyset$ to instantiations of $C_1$ in $T$. 
    Continuing the computation, we receive
    \begin{align*}
            & \Inst(\neg (\neg\exists \, (a_2 \colon C_1 \hookrightarrow C_2, \textsf{true}), \mu_1,\Sub{T}) \\
        =   & \Inst(\exists \, (a_2 \colon C_1 \hookrightarrow C_2, \textsf{true}), \mu_1,\Sub{T}) \\
        =   & \bigvee_{b_2 \in \mathcal{J}_{b_1}} \Big( \exists\, \big(b_2\colon B_1 \subseteq B_2, \Inst(\textsf{true}, \mu_2, \Sub{T}) \big) \Big) \\
        =   & \bigvee_{b_2 \in \mathcal{J}_{b_1}} \exists\, \big(b_2\colon B_1 \subseteq B_2, \textsf{true} \big), 
    \end{align*}
    where $\mathcal{J}_{b_1}$ indexes those instantiations of the morphism $a_2$ whose domain is instantiated to the co-domain of $b_1$. 
    Thus, in total, we receive a conjunction of constraints of the form 
    \begin{align*}
        & \neg\exists \, (b_1\colon \emptyset \subseteq B_1, \textsf{true}) \vee \exists \, \Big(b_1\colon \emptyset \subseteq B_1, \bigvee_{b_2 \in \mathcal{J}_{b_1}} \exists\, \big(b_2\colon B_1 \subseteq B_2, \textsf{true} \big) \Big) \\
       = & \neg\exists \, (b_1\colon \emptyset \subseteq B_1, \textsf{true}) \vee \bigvee_{b_2 \in \mathcal{J}_{b_1}} \exists \, \big(b_2 \circ b_1\colon \emptyset \subseteq B_2, \textsf{true} \big),
    \end{align*}
    where $B_1$ ranges over the possibilities to instantiate $C_1$ in $T$ and $B_2$ over those to instantiate $C_2$ in a way conformant to $B_1$, the instantiation chosen for $C_1$. 
   
    Given $T$ as introduced in Fig.~\ref{fig:CRA-example-solution-T} in Example~\ref{ex:container-and-subgraph}, for $\clb$ the graph $C_1$ can be instantiated in three different ways, mapping its single node of type \textsf{Method} to one of the \textsf{Methods M1, M2, M3} of $T$. 
    For each such instantiation of $C_1$, $C_2$ (and the morphism from $C_1$ to $C_2$) can be instantiated compatibly by mapping its \textsf{Method} in the same way, its \textsf{Class} to one of the \textsf{Classes C1, \dots, C6} of $T$, and the \textsf{encapsulates}-edge accordingly. 
    Figure~\ref{fig:CRA-constraint-lowerbound-SubT-unsimplified} shows one of the conjuncts of the resulting constraint $\clbSubT{T}$, namely the one mapping $C_1$'s node to \textsf{Method M1}.
    
\end{example}

Like Theorem~\ref{thm:semantics-flattening}, also Theorem~\ref{thm:instantiation-preserves-semantics} is proved by structural induction. 
The following lemma, to which we already pointed earlier, is the key technical ingredient in the induction step, clarifying that our instantiation of negated conditions preserves the semantics of conditions. 

\begin{restatable}[Negation in instantiation]{lemma}{negationInstantiation}\label{lem:negation-instantiation}
    Given a type graph $\TG$ and a finite graph $T$ typed over it, let $c = \neg d$ be a nested condition over $C_0$ that is also typed over $\TG$. 
    Then, for any instantiation $\mu_0 = (j\colon C_0 \isomto B_0, \incl{B_0}{T} \colon B_0 \subseteq T)$ of $C_0$ in $\Sub{T}$ and any inclusion $\incl{B_0}{S}\colon B_0 \subseteq S$, it holds that 
    \begin{equation*}
        \incl{B_0}{S} \Imodels \Inst\big(d,\mu_0, \Sub{T}\big) \iff \incl{B_0}{S} \nImodels \Inst\big(\neg d,\mu_0, \Sub{T}\big) . \qedhere
    \end{equation*}
\end{restatable}

Finally, we shortly clarify that an instantiated condition still consists of finitely many graphs; in particular, this ensures that the conjunction/disjunction indexed by a set $\Jcal$ of instantiations is well-defined. 
It also clarifies the severity of the growth of the number of graphs occurring in a condition when instantiating it.
\begin{restatable}[Order of number of instantiated constraints/conditions]{proposition}{orderInstantiation}\label{prop:order-instantiation}
    Given a type graph $\TG$ and a finite graph $T$ that is typed over it, let $c$ be some condition over a graph $C_0$ that is typed over $\TG$, and let $\lvert c\rvert$ denote the number of morphisms occurring in $c$. 
    Then, for every instantiation $\mu_0$ of $C_0$ in $\Sub{T}$,     
    \begin{equation*}
        \lvert \Inst(c, \mu_0, \Sub{T})\rvert \in \mathcal{O}(\lvert c\rvert \cdot \lvert T\rvert^{\lvert \Cmax\rvert}),
    \end{equation*}
    where $\Cmax$ is the largest graph that occurs in $c$ and $\lvert X\rvert$ denotes the number of elements of a graph $X$.
    In particular, all conjunctions and disjunctions in $\Inst(c, \mu_0, \Sub{T})$ are finite, i.e., it is a nested condition. 
    Moreover, the total number of instantiated variants of $c$ in $\Sub{T}$ is also in $\mathcal{O}(\lvert c\rvert \cdot \lvert T\rvert^{\lvert \Cmax\rvert})$. 
\end{restatable}
While the size of $\Inst(c, \mu_0, \Sub{T})$, thus, grows exponentially in the size of the largest graph occurring in $c$, this tends to be not a problem in practice as graphs in nested graph conditions are usually small.

    \section{Conclusion}
    \label{sec:conclusion}
    In this paper, we have presented a semantics-preserving translation of nested conditions and constraints from the category $\GraphTG$ (of graphs typed over a given type graph $\TG$) into nested conditions and constraints formulated in a category $\Sub{T}$, the lattice of subgraphs of a fixed finite graph $T$ typed over $\TG$ (Theorem~\ref{thm:instantiation-preserves-semantics}). 
In $\Sub{T}$, moreover, every nested condition and constraint can be equivalently transformed into a nesting-free normal form (Theorem~\ref{thm:semantics-flattening} and Corollary~\ref{cor:normal-forms-exist}). 
Together, both results show that in $\Sub{T}$ first-order properties can be expressed by the nesting-free fragment of the formalism of nested conditions and constraints (given finiteness of $T$). 

These results are not surprising---we work in a fixed, finite universe and can replace nesting by explicitly enumerating all concrete situations in a formula. 
Our theoretical contribution, therefore, consists in making the expected precise, also clarifying details that are slightly involved (the treatment of negation). 
Practically, however, we see potential for interesting applications of our results: 
A finite lattice of subgraphs $\Sub{T}$ is a quite natural setting for many problems in model-driven optimisation~\cite{JKLT23} or model repair~\cite{MacedoTC17}, and with our work we provide the formal bridge for conveniently specifying constraints as general nested conditions (in $\GraphTG$) but using the far more concrete constraints (in $\Sub{T}$) for downstream tasks. 
In fact, we have developed the theory in this paper to enable such an application, namely the derivation of non-blocking consistency-preserving rules in a finite lattice of subgraphs $\Sub{T}$~\cite{KZ26}.

There are two immediate avenues for generalising our theoretical results:
First, in our proofs we do not make use of the fact that our objects are graphs; what we critically use, though (in Sect.~\ref{sec:reduction-constraints-to-sub-t}), is the fact that the graph $T$, whose subgraphs form our category of interest, is finite and, therefore, there is only a finite number of injective morphisms from a graph $C$ to a subgraph of $T$. 
This suggests that our results should directly transfer from graphs to the far more general setting of categories of subobjects of a container object that stems from any \emph{finitary} $\mathcal{M}$-adhesive category~\cite{GabrielBEG14}. 
Moreover, in our setting of a finite lattice of subgraphs, it is known that first- and second-order logics coincide (see~\cite{ElberfeldGT16}, in particular also for far more interesting graph classes where first-order logic coincides with some fragment of second-order logic). 
This means that it is possible to develop translations, similar to the one presented here, also for formalisms that have been developed to extend nested conditions and constraints beyond first-order as, e.g.,~\cite{Radke12, NavarroOPL21, DrewesHM25}.

    \begin{acknowledgments}
        The authors thank Alexander Lauer for helpful comments on a first draft of this paper. 
        This work was partially funded by the German Research Foundation (DFG), project \enquote{Model-Driven Optimization in Software Engineering} (\href{https://gepris.dfg.de/gepris/projekt/462887453?language=en}{TA 294/19-1}). 
    \end{acknowledgments}

    \section*{Declaration on Generative AI}
    The authors have not employed any Generative AI tools.

	\bibliography{literature.bib}

@book{EEPT06,
 author = {Hartmut Ehrig and Karsten Ehrig and Ulrike Prange and Gabriele Taentzer},
 title = {Fundamentals of Algebraic Graph Transformation},
 publisher = {Springer},
 location = {Berlin and Heidelberg and New York},
 year = {2006},
 doi = {10.1007/3-540-31188-2},
 isbn = {978-3-540-31187-4}
}

@article{HP09,
  author    = {Annegret Habel and
               Karl{-}Heinz Pennemann},
  title     = {Correctness of high-level transformation systems relative to nested conditions},
  journal   = {Math. Struct. Comput. Sci.},
  volume    = {19},
  number    = {2},
  pages     = {245--296},
  year      = {2009},
  doi       = {10.1017/S0960129508007202}
}

@article{CHS08,
  author    = {Andrea Corradini and
               Frank Hermann and
               Pawel Sobocinski},
  title     = {{Subobject Transformation Systems}},
  journal   = {Appl. Categorical Struct.},
  volume    = {16},
  number    = {3},
  pages     = {389--419},
  year      = {2008},
  doi       = {10.1007/s10485-008-9127-6}
}

@article{HCE14,
  author    = {Frank Hermann and
               Andrea Corradini and
               Hartmut Ehrig},
  title     = {Analysis of permutation equivalence in {$\mathcal{M}$}-adhesive transformation systems with negative application conditions},
  journal   = {Math. Struct. Comput. Sci.},
  volume    = {24},
  number    = {4},
  year      = {2014},
  doi       = {10.1017/S0960129512000382}
}

@inproceedings{Rensink04,
  author       = {Arend Rensink},
  editor       = {Hartmut Ehrig and
                  Gregor Engels and
                  Francesco Parisi{-}Presicce and
                  Grzegorz Rozenberg},
  title        = {Representing First-Order Logic Using Graphs},
  booktitle    = {Graph Transformations, Second International Conference, {ICGT} 2004,
                  Rome, Italy, September 28 -- October 2, 2004, Proceedings},
  pages        = {319--335},
  publisher    = {Springer},
  year         = {2004},
  doi          = {10.1007/978-3-540-30203-2_23}
}

@article{LS05,
  author       = {Stephen Lack and
                  Paweł Sobociński},
  title        = {Adhesive and quasiadhesive categories},
  journal      = {{RAIRO} Theor. Informatics Appl.},
  volume       = {39},
  number       = {3},
  pages        = {511--545},
  year         = {2005},
  doi          = {10.1051/ITA:2005028}
}

@InProceedings{Fleck+16,
  author       = {Martin Fleck and
                  Javier Troya and
                  Manuel Wimmer},
  editor       = {Antonio Garc{\'{\i}}a{-}Dom{\'{\i}}nguez and
                  Filip Krikava and
                  Louis M. Rose},
  title        = {{The Class Responsibility Assignment Case}},
  booktitle    = {Proceedings of the 9th Transformation Tool Contest, co-located with
                  the 2016 Software Technologies: Applications and Foundations {(STAF}
                  2016), Vienna, Austria, July 8, 2016},
  pages        = {1--8},
  publisher    = {CEUR-WS.org},
  year         = {2016},
  url          = {https://ceur-ws.org/Vol-1758/paper1.pdf}
}

@article{EGHLO14,
  author    = {Hartmut Ehrig and
               Ulrike Golas and
               Annegret Habel and
               Leen Lambers and
               Fernando Orejas},
  title     = {$\mathcal{M}$-adhesive transformation systems with nested application
               conditions. {Part 1}: parallelism, concurrency and amalgamation},
  journal   = {Math. Struct. Comput. Sci.},
  volume    = {24},
  number    = {4},
  year      = {2014},
  doi       = {10.1017/S0960129512000357}
}

@article{EhrigGHLO12,
  author       = {Hartmut Ehrig and
                  Ulrike Golas and
                  Annegret Habel and
                  Leen Lambers and
                  Fernando Orejas},
  title        = {{$\mathcal{M}$-Adhesive Transformation Systems with Nested Application
                  Conditions. Part 2: Embedding, Critical Pairs and Local Confluence}},
  journal      = {Fundam. Informaticae},
  volume       = {118},
  number       = {1-2},
  pages        = {35--63},
  year         = {2012},
  doi          = {10.3233/FI-2012-705}
}

@article{ElberfeldGT16,
  author       = {Michael Elberfeld and
                  Martin Grohe and
                  Till Tantau},
  title        = {Where First-Order and Monadic Second-Order Logic Coincide},
  journal      = {{ACM} Trans. Comput. Log.},
  volume       = {17},
  number       = {4},
  pages        = {25},
  year         = {2016},
  url          = {https://doi.org/10.1145/2946799},
  doi          = {10.1145/2946799}
}

@article{GHE14,
  author       = {Ulrike Golas and
                  Annegret Habel and
                  Hartmut Ehrig},
  title        = {Multi-amalgamation of rules with application conditions in $\mathcal{M}$-adhesive
                  categories},
  journal      = {Math. Struct. Comput. Sci.},
  volume       = {24},
  number       = {4},
  year         = {2014},
  doi          = {10.1017/S0960129512000345}
}

@book{EhrigEGH15,
  author       = {Hartmut Ehrig and
                  Claudia Ermel and
                  Ulrike Golas and
                  Frank Hermann},
  title        = {Graph and Model Transformation -- General Framework and Applications},
  publisher    = {Springer},
  year         = {2015},
  doi          = {10.1007/978-3-662-47980-3},
  isbn         = {978-3-662-47979-7}
}

@article{HSBMZ22,
  author       = {Jos{\'{e}} Miguel Horcas and
                  Daniel Str{\"{u}}ber and
                  Alexandru Burdusel and
                  Jabier Martinez and
                  Steffen Zschaler},
  title        = {{We're Not Gonna Break It! Consistency-Preserving Operators for Efficient
                  Product Line Configuration}},
  journal      = {{IEEE} Trans. Software Eng.},
  volume       = {49},
  number       = {3},
  pages        = {1102--1117},
  year         = {2023},
  doi          = {10.1109/TSE.2022.3171404}
}

@misc{KZ26,
  author = {Jens Kosiol and
            Steffen Zschaler},
  title = {Non-blocking consistency-preserving subgraph transformation systems via activation diagrams},
  year = {2026},
  url = {https://papers.ssrn.com/sol3/papers.cfm?abstract_id=6184192},
  note = {under review}
}

@article{JKLT23,
  author       = {Stefan John and
                  Jens Kosiol and
                  Leen Lambers and
                  Gabriele Taentzer},
  title        = {A graph-based framework for model-driven optimization facilitating
                  impact analysis of mutation operator properties},
  journal      = {Softw. Syst. Model.},
  volume       = {22},
  number       = {4},
  pages        = {1281--1318},
  year         = {2023},
  doi          = {10.1007/S10270-022-01078-X}
}

@book{Avigad22, 
  place={Cambridge}, 
  title={Mathematical Logic and Computation}, 
  publisher={Cambridge University Press}, 
  author={Avigad, Jeremy}, 
  year={2022},
  doi={https://doi.org/10.1017/9781108778756}
}

@inproceedings{HP05,
  author    = {Annegret Habel and
               Karl{-}Heinz Pennemann},
  editor    = {Hans{-}Jörg Kreowski and
               Ugo Montanari and
               Fernando Orejas and
               Grzegorz Rozenberg and
               Gabriele Taentzer},
  title     = {{Nested Constraints and Application Conditions for High-Level Structures}},
  booktitle = {Formal Methods in Software and Systems Modeling, Essays Dedicated
               to Hartmut Ehrig, on the Occasion of His 60th Birthday},
  pages     = {293--308},
  publisher = {Springer},
  year      = {2005},
  doi       = {10.1007/978-3-540-31847-7_17}
}

@article{GabrielBEG14,
  author       = {Karsten Gabriel and
                  Benjamin Braatz and
                  Hartmut Ehrig and
                  Ulrike Golas},
  title        = {Finitary {$\mathcal{M}$}-adhesive categories},
  journal      = {Math. Struct. Comput. Sci.},
  volume       = {24},
  number       = {4},
  year         = {2014},
  doi          = {10.1017/S0960129512000321}
}

@article{DrewesHM25,
  author       = {Frank Drewes and
                  Berthold Hoffmann and
                  Mark Minas},
  title        = {Systems of Graph Formulas and their Equivalence to Alternating Graph
                  Automata},
  journal      = {CoRR},
  volume       = {abs/2510.25260},
  year         = {2025},
  doi          = {10.48550/ARXIV.2510.25260},
  eprinttype    = {arXiv},
  eprint       = {2510.25260}
}

@article{Radke12,
  author       = {Hendrik Radke},
  title        = {{HR* Graph Conditions Between Counting Monadic Second-Order and Second-Order
                  Graph Formulas}},
  journal      = {Electron. Commun. Eur. Assoc. Softw. Sci. Technol.},
  volume       = {61},
  year         = {2013},
  doi          = {10.14279/TUJ.ECEASST.61.831}
}

@article{NavarroOPL21,
  author       = {Marisa Navarro and
                  Fernando Orejas and
                  Elvira Pino and
                  Leen Lambers},
  title        = {A navigational logic for reasoning about graph properties},
  journal      = {J. Log. Algebraic Methods Program.},
  volume       = {118},
  pages        = {100616},
  year         = {2021},
  doi          = {10.1016/J.JLAMP.2020.100616}
}

@inproceedings{LauerKT25,
  author       = {Alexander Lauer and
                  Jens Kosiol and
                  Gabriele Taentzer},
  editor       = {J{\"{o}}rg Endrullis and
                  Matthias Tichy},
  title        = {Granular Conflict Analysis for Transformation Rules with Application
                  Conditions},
  booktitle    = {Graph Transformation -- 18th International Conference, {ICGT} 2025,
                  Held as Part of {STAF} 2025, Koblenz, Germany, June 11--12, 2025, Proceedings},
  pages        = {63--90},
  publisher    = {Springer},
  year         = {2025},
  doi          = {10.1007/978-3-031-94706-3_4}
}

@inproceedings{KonigRSU25,
  author       = {Barbara K{\"{o}}nig and
                  Arend Rensink and
                  Lara Stoltenow and
                  Fabian Urrigshardt},
  editor       = {J{\"{o}}rg Endrullis and
                  Matthias Tichy},
  title        = {Counterexample-Guided Abstraction Refinement for Generalized Graph
                  Transformation Systems},
  booktitle    = {Graph Transformation -- 18th International Conference, {ICGT} 2025,
                  Held as Part of {STAF} 2025, Koblenz, Germany, June 11--12, 2025, Proceedings},
  pages        = {157--176},
  publisher    = {Springer},
  year         = {2025},
  doi          = {10.1007/978-3-031-94706-3_8}
}

@article{KosiolSTZ22,
  author       = {Jens Kosiol and
                  Daniel Str{\"{u}}ber and
                  Gabriele Taentzer and
                  Steffen Zschaler},
  title        = {Sustaining and improving graduated graph consistency: {A} static analysis
                  of graph transformations},
  journal      = {Sci. Comput. Program.},
  volume       = {214},
  pages        = {102729},
  year         = {2022},
  doi          = {10.1016/J.SCICO.2021.102729}
}

@phdthesis{Pennemann2009,
  author       = {Karl{-}Heinz Pennemann},
  title        = {Development of correct graph transformation systems},
  school       = {University of Oldenburg, Germany},
  year         = {2009},
  url          = {https://nbn-resolving.org/urn:nbn:de:gbv:715-oops-9483},
  urn          = {urn:nbn:de:gbv:715-oops-9483}
}

@inproceedings{StoltenowKSCLO24,
  author       = {Lara Stoltenow and
                  Barbara K{\"{o}}nig and
                  Sven Schneider and
                  Andrea Corradini and
                  Leen Lambers and
                  Fernando Orejas},
  editor       = {Rupak Majumdar and
                  Alexandra Silva},
  title        = {Coinductive Techniques for Checking Satisfiability of Generalized
                  Nested Conditions},
  booktitle    = {35th International Conference on Concurrency Theory, {CONCUR} 2024,
                  Calgary, Canada, September 9--13, 2024},
  pages        = {39:1--39:20},
  publisher    = {Schloss Dagstuhl -- Leibniz-Zentrum f{\"{u}}r Informatik},
  year         = {2024},
  doi          = {10.4230/LIPICS.CONCUR.2024.39}
}

@inproceedings{SH19,
  author       = {Christian Sandmann and
                  Annegret Habel},
  editor       = {Rachid Echahed and
                  Detlef Plump},
  title        = {Rule-based Graph Repair},
  booktitle    = {Proceedings Tenth International Workshop on Graph Computation Models,
                  GCM@STAF 2019, Eindhoven, The Netherlands, 17th July 2019},
  pages        = {87--104},
  year         = {2019},
  doi          = {10.4204/EPTCS.309.5}
}

@article{MacedoTC17,
  author       = {Nuno Macedo and
                  Jorge Tiago and
                  Alcino Cunha},
  title        = {A Feature-Based Classification of Model Repair Approaches},
  journal      = {{IEEE} Trans. Software Eng.},
  volume       = {43},
  number       = {7},
  pages        = {615--640},
  year         = {2017},
  url          = {https://doi.org/10.1109/TSE.2016.2620145},
  doi          = {10.1109/TSE.2016.2620145}
}

@article{KosiolZ26,
  author       = {Jens Kosiol and
                  Steffen Zschaler},
  title        = {A Nesting-Free Normal Form for Nested Conditions in Finite Lattices of Subgraphs -- Extended Version},
  journal      = {CoRR},
  volume       = {abs/2601.18376},
  year         = {2026},
  url          = {https://doi.org/10.48550/arXiv.2601.18376},
  doi          = {10.48550/ARXIV.2601.18376},
  eprinttype   = {arXiv},
  eprint       = {2601.18376}
}

    \iflong
        \appendix
    
        \section{Proofs}
        \label{app:proofs}
        In this appendix, we present all proofs of our results. 

\extractingNegation*

\begin{proof}
    Let $c = \exists \, (b_0\colon B_0 \subseteq B_1, \neg d)$ be such a condition in $\Sub{T}$ with $d$ being a condition over $B_1 \subseteq T$ and let $c' = \exists \, (b_0 \colon B_0 \subseteq B_1, \textsf{true}) \wedge \neg \exists \, (b_0 \colon B_0 \subseteq B_1, d)$. 
    Since there is maximally one morphism between any two elements of $\Sub{T}$, namely an inclusion, the stated equivalence, $c \Iequiv c'$, holds if for every $\incl{B_0}{G}\colon B_0 \subseteq G$, where $B_0 \subseteq G \subseteq T$ and $\incl{B_0}{G}$ denotes the inclusion, $\incl{B_0}{G} \Imodels c$ if and only if $\incl{B_0}{G} \Imodels c'$.
    By definition of the semantics of conditions, such an inclusion $\incl{B_0}{G}$ satisfies $c$ if and only if there exists an inclusion $\incl{B_1}{G}\colon B_1 \subseteq G$ such that $\incl{B_1}{G} \nvDash d$. 

    Similarly, also by definition of the semantics, $\incl{B_0}{G}$ satisfies $c'$ if it satisfies $\exists \, (b_0 \colon B_0 \subseteq B_1, \textsf{true})$ and also satisfies $\neg \exists \, (b_0 \colon B_0 \subseteq B_1, d)$. 
    This is the case if and only if $B_1 \subseteq G$, i.e., $\incl{B_1}{G}\colon B_1 \subseteq G$ exists, but simultaneously $\incl{B_0}{G}$ does \emph{not} satisfy $\exists \, (b_0 \colon B_0 \subseteq B_1, d)$. 
    By existence of $\incl{B_1}{G}$ this can only be the case if $\incl{B_1}{G} \nvDash d$. 
    Therefore, $c \Iequiv c'$. 
\end{proof}

\flatteningToLiterals*

\begin{proof}
    By the recursive definition of conditions, the recursive flattening construction always ends with applying the rules $\Flat(b_0, \textsf{true}) = \exists \, (b_0, \textsf{true})$ or $\Flat(b_0, \neg\textsf{true}) = \neg\textsf{true}$, resulting in a (negated) literal of nesting depth $\leq 1$. 
    Among the further rules applied during the construction, only the rule $\Flat(b_0, \neg \exists \, (b_1\colon B_0 \subseteq B_1, d) = \exists \, (b_0, \textsf{true}) \wedge \neg \Flat(b_1 \circ b_0, d)$ directly outputs a condition, namely $\exists \, (b_0, \textsf{true})$, and $\nl(\exists \, (b_0, \textsf{true})) = 1$. 
    All other rules just determine the structure of the Boolean combination of the literals computed as output, but Boolean combinations of literals do not increase nesting depth.
\end{proof}

\semanticsFlattening*

\begin{proof}
    For conditions, it generally holds that 
    \begin{equation}\label{eq:composition-existential-quantification}
        \exists\, (b_1, \exists\, (b_2, d)) \equiv \exists\, (b_2 \circ b_1, d) ;
    \end{equation} 
    and even $ \exists\, (j, d) \equiv d$ for any isomorphism $j$~\cite[Fact~4]{HP05}. 
    With that, the second claim of the theorem immediately follows from the first: 
    By the first claim we have $\incl{B_0}{G} \Imodels \Flat(\mathit{id}_{B_0},c) \equiv \incl{B_0}{G} \Imodels \exists \, (\mathit{id}_{B_0},c)$, which is always equivalent to $\incl{B_0}{G} \Imodels c$. 
    
    We prove the first claim via structural induction, where the first two items in the definition of flattening constitute the bases for this induction. 
    For these base cases, $\Flat(b_0,c)$ even equals $\exists \, (b_0,c)$ by definition. 

    Assuming the statement of the first claim to be true for some condition $d$ (IH), the induction step is routine for the cases merely involving Boolean connectives. 
    For the two cases involving existential quantifiers we compute
    \begin{align*}
        \incl{X}{G} \Imodels \Flat(b_0, \exists\, (b_1, d)) & \overset{\text{Def}}{\iff} \incl{X}{G} \Imodels \Flat(b_1 \circ b_0, d) \\
                                                            & \overset{\text{(IH)}}{\iff} \incl{X}{G} \Imodels \exists \, (b_1 \circ b_0, d)\\
                                                            & \overset{\text{Eq.}~\ref{eq:composition-existential-quantification}}{\iff} \incl{X}{G} \Imodels \exists \, (b_0, \exists\, (b_1, d))
    \end{align*}
    and 
    \begin{align*}
        \incl{X}{G} \Imodels \Flat(b_0, \neg \exists\, (b_1, d))    & \overset{\text{Def}}{\iff} \incl{X}{G} \Imodels \exists\, (b_0, \textsf{true}) \wedge \neg \Flat(b_1 \circ b_0, d) \\
                                                                    & \overset{\text{(IH)}}{\iff} \incl{X}{G} \Imodels \exists\, (b_0, \textsf{true}) \wedge \neg\exists \, (b_1 \circ b_0, d)\\
                                                                    & \overset{\text{Eq.}~\ref{eq:composition-existential-quantification}}{\iff} \incl{X}{G} \Imodels \exists\, (b_0, \textsf{true}) \wedge \neg \exists \, (b_0, \exists\, (b_1, d))\\
                                                                    & \overset{\text{Lem.~}\ref{lem:extracting-negation}}{\iff} \incl{X}{G} \Imodels \exists\, (b_0, \neg \exists\,(b_1,d)). \qedhere
    \end{align*} 
\end{proof}

\unionConjunctions*

\begin{proof}
    There exists an inclusion for every $B_1^i$ in $S$ if and only if there exists an inclusion of $\bigcup_{i=1}^kB_1^i$ in $S$.
\end{proof}

\correspondenceInstantiations*

\begin{proof}
    For any instantiation $\mu = (j, \incl{B}{S})$, the morphism $\incl{B}{S} \circ j \colon C \to S$ is injective because $ \incl{B}{S}$ and $j$ are. 
    In the other direction, for every injective morphism $q\colon C \hookrightarrow S$, there exists the unique image-factorisation $q = \qm \circ \qe$ with $\qe\colon C \to \Ima q$ and $\qm\colon \Ima q \to S$, where we fix $\Ima q$ to be a subgraph of $S$ (and not merely an isomorphic copy of the image). 
    By construction, $\qe$ is surjective and $\qm$ an inclusion; by injectivity of $q$ and $\qm$, $\qe$ is additionally injective and therefore an isomorphism. 
    Both mappings (composition and image-factorisation) are inverses to each other. 
    Hence, the claimed one-to-one correspondence is established.
    
    Given an instantiation $\mu = (j, \incl{B}{S})$, the instantiation $\mu^{T} = (j, \incl{B}{T})$ in $\Sub{T}$ is obtained by just composing $\incl{B}{S}$ with the inclusion $ \incl{S}{T}\colon S \subseteq T$; i.e., $\incl{B}{T} \coloneqq  \incl{S}{T} \circ \incl{B}{S}$.
\end{proof}

\uniquenessInstantiation*

\begin{proof}
    For the following, compare Fig.~\ref{fig:instantiation-morphism}. 
    By invertibility of $j_0$, setting $b \coloneqq j_1 \circ a \circ j_0^{-1}$ is the only possible definition of $b$ that makes the upper square commute. 
    If $\incl{B_1}{S} \circ j_1 \circ a \neq \incl{B_0}{S} \circ j_0$, by definition there cannot exist an instantiation of $a$ as morphism from $\mu_0$ to $\mu_1$.  
    If this equation holds, however, we compute
    \begin{align*}
        \incl{B_1}{S} \circ b \circ j_0 & = \incl{B_1}{S} \circ j_1 \circ a \circ j_0^{-1} \circ j_0 \\
                                                    & = \incl{B_1}{S} \circ j_1 \circ a \\
                                                    & = \incl{B_0}{S} \circ j_0, 
    \end{align*}
    which implies $\incl{B_1}{S} \circ b = \incl{B_0}{S}$ by surjectivity of $j_0$. 
    
    Furthermore, as every instantiated morphism $b$ satisfies $\incl{B_1}{S} \circ b = \incl{B_0}{S}$, and $\incl{B_0}{S},\incl{B_1}{S}$ are both injective, also $b$ is injective which makes it an embedding (as both $B_0$ and $B_1$ are subgraphs of $S$). 
    Finally, every morphism $a$ that satisfies Eq.~\ref{eq:existence-instantiating-morphism} is injective because $\incl{B_0}{S} \circ j_0$ and $\incl{B_1}{S} \circ j_1$ are.
\end{proof}

\negationInstantiation*

\begin{proof}
    We prove the statement via structural induction.
    With regard to the inductive basis, for $d = \textsf{true}$, clearly
    \begin{equation*}
        \incl{B_0}{S} \nImodels \Inst\big(\neg d,\mu_0, \Sub{T}\big) = \textsf{false} \iff \incl{B_0}{S} \Imodels \Inst\big(d,\mu_0, \Sub{T}\big) = \textsf{true} .
    \end{equation*}
    
    Assuming the statement to hold for conditions $e_1$ and $e_2$ (induction hypothesis), for $c = \neg d$ with $d= e_1 \wedge e_2$ we can compute 
    \begin{align*}
                    & \incl{B_0}{S} \nImodels \Inst\big(\neg (e_1 \wedge e_2),\mu_0, \Sub{T}\big) \\	
        \iff	& \incl{B_0}{S} \nImodels \Big(\Inst\big(\neg e_1,\mu_0, \Sub{T}\big) \vee \Inst\big(\neg e_1,\mu_0, \Sub{T}\big)\Big) \\
        \iff	& \incl{B_0}{S} \nImodels \Inst\big(\neg e_1,\mu_0, \Sub{T}\big) \text{ and } \incl{B_0}{S} \nImodels \Inst\big(\neg e_1,\mu_0, \Sub{T}\big) \\
        \iff	& \incl{B_0}{S} \Imodels \Inst\big(e_1,\mu_0, \Sub{T}\big) \text{ and } \incl{B_0}{S} \Imodels \Inst\big(e_1,\mu_0, \Sub{T}\big) \\
        \iff	& \incl{B_0}{S} \Imodels \Inst\big(e_1 \wedge e_2,\mu_0, \Sub{T}\big) ;
    \end{align*}
    an analogous computation shows the case for $d = e_1 \vee e_2$. 
    
    Assuming the statement to hold for a condition $e$, for $c = \neg d$ with $d = \neg e$ we compute
    \begin{align*}
        \incl{B_0}{S} \nImodels \Inst\big(\neg \neg e,\mu_0, \Sub{T}\big)	& \iff \incl{B_0}{S} \nImodels \Inst\big(e,\mu_0, \Sub{T}\big) \\
                                                                                                                    & \iff \incl{B_0}{S} \models \Inst\big(\neg e,\mu_0, \Sub{T}\big) .
    \end{align*}
    
    And finally, assuming the statement to hold for a condition $e$ (over a graph $C_1$), for $c = \neg d$ with $d = \exists \, (a_1\colon C_0 \hookrightarrow C_1, e)$ we compute
    \begin{align*}
                    & \incl{B_0}{S} \nImodels \Inst\big(\neg \exists \, (a_1\colon C_0 \hookrightarrow C_1, e), \mu_0, \Sub{T} \big)\\
        \iff	& \incl{B_0}{S} \nImodels \bigwedge_{b_1 \in \Jcal} \neg\exists\, (b_1\colon B_0 \subseteq B_1, \textsf{true}) \vee \\
                    & \exists \, \big(b_1\colon B_0 \subseteq B_1, \Inst(\neg e,\mu_1,\Sub{T})\big) \\
        \iff	& \text{there exists } b_1 \in \Jcal \text{ s.t. } \incl{B_0}{S} \nImodels \neg\exists\, (b_1\colon B_0 \subseteq B_1, \textsf{true}) \vee \\
                    & \exists \, \big(b_1\colon B_0 \subseteq B_1, \Inst(\neg e,\mu_1,\Sub{T})\big) \\
        \iff	& \text{there exists } b_1 \in \Jcal \text{ s.t. there exists an inclusion } \incl{B_1}{S}\colon B_1 \subseteq S \text{ s.t.}\\
                    & \incl{B_1}{S} \circ b_1 = \incl{B_0}{S} \text{ and, for every such inclusion, } \incl{B_1}{S} \nImodels \Inst\big(\neg e, \mu_1, \Sub{T}\big) \\
        \iff	& \text{there exists } b_1 \in \Jcal \text{ s.t. there exists an inclusion } \incl{B_1}{S}\colon B_1 \subseteq S \text{ s.t.}\\
                    & \incl{B_1}{S} \circ b_1 = \incl{B_0}{S} \text{ and, for this inclusion, } \incl{B_1}{S} \nImodels \Inst\big(\neg e, \mu_1, \Sub{T}\big) \\
        \iff	& \text{there exists } b_1 \in \Jcal \text{ s.t. there exists an inclusion } \incl{B_1}{S}\colon B_1 \subseteq S \text{ s.t.}\\
                    & \incl{B_1}{S} \circ b_1 = \incl{B_0}{S} \text{ and, for this inclusion, } \incl{B_1}{S} \Imodels \Inst\big(e, \mu_1, \Sub{T}\big) \\
        \iff	& \incl{B_0}{S} \Imodels \Inst\big(\exists \, (a_1\colon C_0 \hookrightarrow C_1, e), \mu_0, \Sub{T} \big) .
    \end{align*}
    
    In this last computation, for the fourth equivalence we use the fact that there can only be one inclusion of a graph $B_1$ in a graph $S$---that is, only one morphism from $B_1$ to $S$ in $\Sub{T}$. 
\end{proof}

\instantiationPreservationSemantics*

\begin{proof}[Proof of Theorem~\ref{thm:instantiation-preserves-semantics}]
    We prove the claim via structural induction. 
    The assertion trivially holds for \textsf{true} (induction basis).
    Let $d$ be a condition over $C_1$ in $\GraphTG$ such that 
    \begin{equation*}
        q \models d \iff \qm \Imodels \Inst(d,\mu_q^{T},\Sub{T}) 
    \end{equation*}
    for every injective morphism $q\colon C_1 \hookrightarrow S$ for every $S \in \Sub{T}$ (induction hypothesis). 
    The inductive steps for $\wedge$ and $\vee$ are trivial. 
    For $c = \neg d$, we can use Lemma~\ref{lem:negation-instantiation} to obtain
    \begin{align*}
        q \models \neg d	& \iff q \nvDash d \\
                                            & \iff \qm \nImodels \Inst(d,\mu_q^{T},\Sub{T}) \\
                                            & \iff \qm \Imodels \Inst(\neg d,\mu_q^{T},\Sub{T}) .
    \end{align*}
    
    Lastly, we show the inductive step for the existential quantifier. 
    For every injective morphism $g \colon C_0 \hookrightarrow S$ where $S \in \Sub{T}$, by definition, $g \models c$ if and only if there exists an injective morphism $q\colon C_1 \hookrightarrow S$ such that $q \circ a_1 = g$ and $q \models d$. 
    By Lemma~\ref{lem:unique-instantiation-morphism}, there then exists a unique instantiation $b_1\colon B_0 \subseteq B_1$ between the instantiation of $C_0$ that is provided by $g$ and the one of $C_1$ that is provided by $q$, and all instantiations extend to instantiations in $T$; Fig.~\ref{fig:instantiation-morphism-proof} depicts the situation. 
    In particular, since for $\gmSPI\colon B_0 \subseteq T \coloneqq \incl{S}{T} \circ \gm$ and $\qmSPI\colon B_1 \subseteq T \coloneqq \incl{S}{T} \circ \qm$ we have $\qmSPI \circ b_1 = \gmSPI$, 
    \begin{equation*}
        \exists \, \big(b_1 \colon B_0 \subseteq B_1, \Inst(d,\mu_q^{T},\Sub{T})\big)
    \end{equation*}
    is one of the terms that occur in the disjunction $\Inst(c,\mu_g^{T},\Sub{T})$. 
    Summarizing, $\qm \circ b_1 = \gm$ and $\qm \Imodels \Inst(d,\mu_q^{T},\Sub{T})$ imply 
    \begin{equation*}
        \gm \Imodels \exists \, \big(b_1 \colon B_0 \subseteq B_1, \Inst(d,\mu_q^{T},\Sub{T})\big)
    \end{equation*}
    which implies $\gm \Imodels \Inst(c,\mu_g^{T},\Sub{T})$.
    
    \begin{figure}%
        \centering
        \begin{tikzpicture}
            \matrix (m) [	matrix of math nodes,
                                        row sep=1.25em,
                                        column sep=1.25em,
                                        minimum width=1.25em,
                                        nodes in empty cells]
            {
                C_0	&										& C_1 \\
                                                &										& \\
                B_0	&										& B_1 \\
                                                & 									& \\
                                                & S			& \\
                                                &										& \\
                                                & T	& \\};
            \path[-stealth]
                (m-1-1) edge [->] node [above] {\scriptsize $a_1$} (m-1-3)
                                edge [draw=none] node [sloped] {$\simeq$} node [left,inner xsep=7pt] {\scriptsize $\gep$} (m-3-1)
                                edge [left hook->,bend right=60] node [left] {\scriptsize $g$} (m-5-2)
                (m-1-3) edge [draw=none] node [sloped] {$\simeq$} node [right,inner xsep=7pt] {\scriptsize $\qe$} (m-3-3)
                                edge [right hook->,bend left=60] node [right] {\scriptsize $q$} (m-5-2)
                (m-3-1) edge [->,dashed] node [below] {\scriptsize $b_1$} (m-3-3)
                                edge [draw=none] node [sloped] {$\subseteq$} node [left,inner xsep=8pt,pos=.6] {\scriptsize $\gm$} (m-5-2)
                (m-3-3) edge [draw=none] node [sloped,allow upside down] {$\subseteq$} node [right,inner xsep=8pt,pos=.6] {\scriptsize $\qm$} (m-5-2)
                (m-5-2) edge [draw=none] node [sloped] {$\subseteq$} node [right,inner xsep=8pt] {\scriptsize $\incl{S}{T}$} (m-7-2);
        \end{tikzpicture}
        \caption{Instantiation of the morphism $a\colon C_0 \hookrightarrow C_1$ for the instantiations $\mu_g$ and $\mu_q$}%
        \label{fig:instantiation-morphism-proof}%
    \end{figure}
    
    If, conversely, there exists an instantiation $\mu_g = (\gep, \gm)$ such that $\gm \Imodels \Inst(c,\mu_g^{T},\Sub{T})$, there exists (at least) one term 
    \begin{equation*}
        \exists \, \big(b_1 \colon B_0 \subseteq B_1, \Inst(d,\mu_1,\Sub{T})\big)
    \end{equation*}
    in the disjunction $\Inst(c,\mu_g^{T},\Sub{T})$ that $\gm$ satisfies (in $\Sub{T}$). 
    In particular, there exists an inclusion $\qm\colon B_1 \subseteq S$ such that 
    \begin{equation*}
        \qm \Imodels \Inst(d,\mu_1,\Sub{T}) \text{ and } \qm \circ b_1 = \gm.
    \end{equation*}
    Since inclusions are unique, it holds that $\incl{S}{T} \circ \qm = \incl{C_1}{T}$, where $\mu_1 = (j_1, \incl{C_1}{T})$ is the instantiation of $C_1$ in $T$ and $\incl{S}{T}\colon S \subseteq T$. 
    From this, we obtain the instantiation $\mu_q = (j_1,\qm)$ of $C_1$ in $S$ for which $\mu_q^{T} = \mu_1$. 
    Thus, the induction hypothesis applies and 
    \begin{equation*}
        q \models d
    \end{equation*}
    for $q = \qm \circ j_1$.
    Furthermore (compare Fig.~\ref{fig:instantiation-morphism-proof} also for this), 
    \begin{align*}
        q \circ a_1	& = \qm \circ \qe \circ a_1 \\
                                & = \qm \circ b_1 \circ \gep \\
                                & = \gm \circ \gep \\
                                & = g,
    \end{align*}
    which overall implies that $g \models c$.
\end{proof}

\orderInstantiation*

\begin{proof}
    Instantiating $c$ at $\mu_0$, means to, beginning at the fixed instantiation of $C_0$ that is determined by $\mu_0$, instantiate every morphism that occurs in $c$ in every possible way that accords with the currently considered instantiation of the domain of the morphism that is to be instantiated. 
    In that, by Lemma~\ref{lem:unique-instantiation-morphism}, every instantiation of the co-domain of a morphism corresponds to exactly one instantiation of the underlying morphism, namely one where the domain is instantiated accordingly. 
    Hence, the number of possible instantiations of the co-domain provide an upper bound on how often an occurring morphism is instantiated. 
    As the instantiations of a graph $C$ in $T$ correspond to injective morphisms from $C$ to $T$ (see Lemma~\ref{lem:correspondence-instantiations}), there are maximally 
    \begin{equation*}
        \sum_{C \text{ codomain in } c}\binom{\lvert T\rvert}{\lvert C\rvert}\cdot\lvert C \rvert!
    \end{equation*}
    many (this is the case were $T$ and $C$ are sets and typing and non-existence of necessary edges do not prevent identifications). 
    Using the inequality $\binom{n}{k} \leq \frac{n^k}{k!}$ and replacing the size of the graphs from $c$ with the size of the largest one yield the claim. 
    (Note that we can ignore the fact that instantiating a negated morphism can lead to it occurring twice in the instantiated condition as this just introduces the constant factor 2.)
    
    Since the number of instantiations is finite for every morphism, the conjunctions and disjunctions in an instantiated condition are well-defined.

    The final claim follows from the fact that the maximal number of instantiations $\mu_0$ of $C_0$ in $\Sub{T}$ is also bounded by $\lvert T\rvert^{\lvert \Cmax\rvert}$. 
\end{proof}

    \fi
\end{document}